\theoremstyle{plain}
\newtheorem{theorem}{Theorem}
\newtheorem*{theorem*}{Theorem}
\newtheorem{fact}[theorem]{Fact}
\newtheorem*{fact*}{Fact}
\newtheorem{question}[theorem]{Question}
\newtheorem*{question*}{Question}
\newtheorem{lemma}[theorem]{Lemma}
\newtheorem*{lemma*}{Lemma}
\newtheorem{proposition}[theorem]{Proposition}
\newtheorem*{proposition*}{Proposition}
\newtheorem{corollary}[theorem]{Corollary}
\newtheorem*{corollary*}{Corollary}
\theoremstyle{remark}
\newtheorem*{remark*}{Remark}
\newtheorem{claim}[theorem]{Claim}
\newtheorem*{claim*}{Claim}
\theoremstyle{definition}
\newtheorem{definition}[theorem]{Definition}
\newlength{\@thlabel@width}%
\newcommand{\thmenumhspace}{\settowidth{\@thlabel@width}{\itshape1.}\sbox{\@labels}{\unhbox\@labels\hspace{\dimexpr-\leftmargin+\labelsep+\@thlabel@width-\itemindent}}}
\newcommand\fii{\varphi}%
\newcommand\zfc{\mathrm{ZFC}}%
\newcommand\zf{\mathrm{ZF}}%
\newcommand\LL{\mathcal{L}}%
\newcommand\K{\mathcal{K}}%
\newcommand\A{\mathcal{A}}%
\newcommand\hod{\operatorname{HOD}}%
\newcommand\Def{\operatorname{Def}}%
\newcommand\dom{\operatorname{Dom}}%
\newcommand\ord{{\bf Ord}}%
\newcommandx\sing[1][usedefault, addprefix=\global, 1=]{{\bf Sing}_{#1}}%
\newcommand\con{\subseteq}%
\newcommand\emp{\varnothing}%
\newcommand\aaa{\mathtt{aa}}%
\newcommand\dq{\Delta{(Q_{1})}}
\newcommand\cq{C(\Delta(Q_{1}))}
\newcommand\lqw{\LL{(Q_{1},\W )}}
\newcommand\dqw{\Delta{(Q_{1},\W )}}
\newcommand\sqw{\Sigma{(Q_{1},\W )}}
\newcommand\pqw{\Pi{(Q_{1},\W )}}
\newcommand\cqw{C(\Delta(Q_{1},\W ))}
\DeclareMathOperator{\Sat}{Sat}
\DeclareMathOperator{\Fml}{Fml}
\DeclareMathOperator{\Fr}{Fr}
\DeclareMathOperator{\Ind}{Ind}
\DeclareMathOperator{\Asn}{Asn}
\newcommand{\W}{\mathrm{W}}
\newcommand\LST{\textup{LST}}
\newcommand\lst{\mathscr{L}}
\begin{document}
\title{Inner models from extended logics and the Delta-operation%
	\thanks{We are  grateful to Menachem Magidor for suggestions concerning the results of this paper.
		We would like to thank the anonymous referee for their comments and suggestions.
		We thank the American Institute of Mathematics for their support.
		This project has received  funding from the European Research Council (ERC) under the European Union’s Horizon 2020 research and innovation programme (grant agreement No 101020762) as well as from the Research Council of Finland (grants number 322795 and 368671).}
}
\author{Jouko  Väänänen\\ {\small Helsinki and Amsterdam} \and  Ur Ya'ar\\ {\small Helsinki }}
\maketitle
\begin{abstract}
	If $\LL$ is an abstract logic (a.k.a. model theoretic logic), we can define the inner model $C(\LL)$ by replacing first order logic with $\LL$ in G\"odel's definition of the inner model $L$ of constructible sets.
	Set theoretic properties of such inner models $C(\LL)$ have been investigated recently and a spectrum of new inner models is emerging between $L$ and $\hod$. The topic of this paper is the effect on $C(\LL)$ of a slight modification of $\LL$ i.e. how sensitive is $C(\LL)$ on the exact definition of $\LL$? The $\Delta$-extension $\Delta(\LL)$ of a logic is generally considered a ``mild'' extension of $\LL$.
	We give examples of logics $\LL$ for which the inner model $C(\LL)$ is consistently strictly smaller than the inner model $C(\Delta(\LL))$, and in one case we show this follows from the existence of $0^{\sharp}$.
\end{abstract}

\section{Introduction}

This paper is part of a larger project focusing on applications of strong logics in set theory. A pioneering result in this direction is Menachem Magidor's characterization of L\"owenheim-Skolem type properties as well as compactness properties of second order logic in terms of supercompact and extendible cardinals \cite{MR295904}. Another example is the so-called Magidor-Malitz quantifier (\cite{MR453484}). A more recent example is the project of ``extended constructibility" (\cite{KMV1,KMV2}).

In \cite{KMV1} an inner model of set theory $C(\LL)$ was associated with any abstract logic $\LL$
(all relevant notions are defined below).
For many logics $\LL$ this inner model is just G\"odel's inner model $L$ of constructible sets.
But consistently, and also assuming large cardinals, $C(\LL)$ can be very different from Gödel's $L$ \cite{KMV1}.
For example, if $\LL$ is the stationary logic of \cite{StatLogic} and we assume that there is a proper class of Woodin cardinals, the resulting inner model $C(\LL)$, denoted $C(\mathtt{aa})$, has a proper class of measurable cardinals \cite{KMV2}.
Associated with any abstract logic $\LL$ there is an extension $\Delta(\LL)$ of  $\LL$, introduced in \cite{Barwise-Axioms}, see also \cite{MSS-Delta}. We give the definition below.
It is generally considered that the difference between $\Delta(\LL)$ and  $\LL$ is minor, from the model theoretic point of view, in the sense that $\Delta$ preserves a lot of model theoretic properties of the logic $\LL$.
In this paper we compare the inner models $C(\LL)$ and $C(\Delta(\LL))$ and show that these can be different
(of course, if $V=L$ they are the same, so we look for consistency results and results under large cardinal assumptions).
This shows that the set-theoretical power of  $\Delta(\LL)$ and  $\LL$ can be quite different even if they greatly resemble each other from a model theoretic view point.
We begin by recalling definitions and facts we will use throughout the paper.

\subsection{Abstract Logics and the $\Delta$-operation}\label{subsec:prelimabst}

We present the basic notions we require from the theory of abstract logics. For further information, see \cite{MT-Logics}.

An \emph{abstract logic} $\LL$ consists of a pair $(S,T)$ where, given a vocabulary $\tau$,
$S$ defines the class of \emph{sentences} of $\LL$ in the vocabulary $\tau$,
and $T$ defines the \emph{satisfaction relation} between $\tau$-structures and members of $S$.
We usually say simply ``a logic'' instead of ``abstract logic'', abuse notation by writing e.g. $\varphi \in \LL$ instead of $S(\varphi)$,
and write $\mathcal{M} \models_{\LL} \varphi$ instead of $T(\mathcal{M},\varphi)$.
We may consider \emph{formulas} with free variables by adding constant symbols to the vocabulary.
While this definition is quite general, prominent examples of abstract logics are concretely given by supplementing the syntax of first-order logic with additional quantifiers and extending the semantics with appropriate clauses for these quantifiers (some examples are presented below).
A core idea of the field of abstract logics, giving it the synonym ``Model Theoretic'' logics, is looking at classes of models as defining logics.
We recall some definitions.

\begin{definition}\label{def:Delta}
	A \emph{model class} is any class of models of a fixed vocabulary $\tau$ which is  closed under isomorphisms.
	Every sentence $\phi$ of a logic $\LL$ defines a model class, namely the class of models satisfying $\phi$.
	Suppose $\mathcal{K}$ is a model class of vocabulary $\tau$.
	$\mathcal{K}$ is said to be:
	\begin{itemize}
		\item \emph{$\LL$-definable} if there is a sentence $\phi$ of $\LL$ such that  $\mathcal{K}$ is the class of models satisfying $\phi$;
		\item     \emph{$\Sigma(\LL)$-definable} if there is a bigger vocabulary $\tau'$, possibly many-sorted, and a sentence $\phi$ of $\LL$ in the vocabulary $\tau'$ such that $\mathcal{K}$ is the class of reducts of models of $\phi$ to $\tau$;
		\item \emph{$\Pi(\LL)$-definable} if its complement  (in the class of all models of vocabulary $\tau$) is $\Sigma(\LL)$-definable;
		\item \emph{$\Delta(\LL)$-definable} if both $\mathcal{K}$ and its complement  are $\Sigma(\LL)$-definable (in other words, if it is both $\Sigma(\LL)$- and $\Pi(\LL)$-definable).
	\end{itemize}

	A logic $\LL'$ \emph{extends} another logic $\LL$, in symbols $\LL\le \LL'$, if every definable model class of $\LL$ is also a definable model class of $\LL'$.
	The  logics $\LL$ and $\LL'$ are \emph{equivalent}, in symbols $\LL \equiv \LL'$, if $\LL\le\LL'$ and $\LL'\le\LL$.

	The \emph{$\Delta$-extension} $\Delta(\LL)$ of an abstract logic $\LL$ is the $\leq$-least logic  $\LL^*$ such that every $\Delta(\LL)$-definable model class is $\LL^*$ definable.
\end{definition}

In the most abstract setting, $\Delta(\LL)$ can be defined as the logic whose sentences are simply the $\Delta(\LL)$-definable model classes, so the satisfaction relation is just the membership relation.
More concretely, we can follow \cite[Definition II.7.2.3]{MT-Logics} and let the sentences of $\Delta(\LL)$ in a vocabulary $\tau$ be pairs $(\phi,\psi)$ such that $\phi$ and $\psi$ are $\LL$-sentences in vocabularies extending  $\tau$, and:
\begin{equation}\label{delta}
	\begin{array}{l}
		\mbox{For all models $\mathcal{M}$ of vocabulary $\tau$:}              \\
		\mbox{$\mathcal{M}$ is a reduct of a model of $\phi$  if and only if } \\
		\mbox{$\mathcal{M}$ is not a reduct of a model of $\psi$.}
	\end{array}\end{equation}

Intuitively speaking,
$\Delta(\LL)$-definability is a form of \emph{implicit} definability, and is related to the notion of \emph{interpolation}:

\begin{definition}\label{def:interpolation}
	A logic $\LL$ is said to have the \emph{Craig Interpolation Property} if every two disjoint $\Sigma(\LL)$-definable model classes (of the same vocabulary) can be separated by a definable model class of $\LL$.
	$\LL$ has the  \emph{Suslin-Kleene Interpolation Property} if every two \emph{complementary} $\Sigma(\LL)$-definable classes can be separated by a definable class of $\LL$.
\end{definition}
In other words, $\LL$ has the Suslin-Kleene Interpolation Property iff every  $\Delta(\LL)$-definable class is actually definable iff $\Delta(\LL)=\LL$.
Clearly every logic with the Craig Interpolation Property has the Suslin-Kleene Interpolation Property, but the converse is generally false (\cite[Theorem 3.1]{Hutchinson-ModelTheorySet1976}).

$\Delta(\LL)$ can be considered a minor extension of $\LL$ in the sense that it preserves many model theoretic properties of $\LL$:
\begin{fact}[\cite{MSS-Delta}]\label{fact:D-ext-properties}
	Let $\LL$ be an abstract logic. Then
	\begin{itemize}
		\item $\Delta(\LL)$ is $(\kappa,\lambda)$-compact (for infinite cardinals $\kappa\geq \lambda$) if $\LL$ is;
		      \begin{definition}\label{def:compact}
			      A logic $\LL$ is said to be \emph{$(\kappa, \lambda)$-compact} whenever given a set of $\LL$-sentences $\Sigma$ of cardinality $\kappa$, if each subset of $\Sigma$ of cardinality $<\lambda$ has a model, then $\Sigma$ has a model.
			      A logic is \emph{$\kappa$-compact} if it is $(\kappa,\aleph_{0})$-compact.
		      \end{definition}
		\item $\Delta(\LL)$ has the same L\"owenheim-number as $\LL$;
		      \begin{definition}\label{def:lowenheim}
			      The \emph{Löwenheim-number} of $\LL$ is the smallest cardinal $\kappa$ such that if an arbitrary sentence of $\LL$ has any model, the sentence has a model of cardinality no larger than $\kappa$.
		      \end{definition}
		\item $\Delta(\LL)$ is axiomatizable if $\LL$ is;
		\item $\Delta(\LL)$ is the minimal extension of $\LL$ to a logic with the Suslin-Kleene Interpolation Property;
		\item $\Delta(\Delta(\LL))=\Delta(\LL)$.
	\end{itemize}
\end{fact}

A weakness of the $\Delta$-operation is that its syntax depends, a priori, on set theory.
Depending on the logic $\LL$, the question whether a pair $(\phi,\psi)$ is a sentence of $\Delta(\LL)$, i.e. whether the equivalence (\ref{delta}) above holds, may be non-absolute in set theory and of high set theoretical complexity.
This is in sharp contrast to the absoluteness of the syntax of most logics that one encounters, e.g. first order logic and its extensions by generalized quantifiers.
In fact, we exploit a non-absolute case of (\ref{delta}) in our proof of Theorem \ref{thm:DeltaQ1} below.
\label{AC} One consequence of the potential non-absoluteness of the syntax of $\Delta(\LL)$ is that the usual method of proving that the Axiom of Choice (AC) holds in models of the form $C(\LL)$
(Fact \ref{fact:AC})
cannot be used, as it requires having a recursive set of formulas.
However our results do not require AC to hold in the models we discuss, so we leave the question whether or not it holds open.

\subsection{Inner models from extended logics}\label{subsec:IMEL}

We review some of the definitions and facts from \cite{KMV1} regarding constructing inner models using extended logics.

\begin{definition}\label{def:C(L)}
	Let $\LL$ be a logic (extending first-order logic).
	\begin{enumerate}
		\item Let $M$ be a set. Define:
		      \[
			      \Def_{\LL}\left(M\right)=\left\{ \left\{ a\in M\mid\left(M,\in\right)\models_{\LL}\varphi(a,\bar{b})\right\} \mid\varphi\in\LL;\,\bar{b}\in M^{<\omega}\right\}
		      \]
		\item $C\left(\LL\right)$, the class of \emph{$\LL$-constructible sets},
		      is defined by induction:
		      \begin{align*}
			      L'_{0}            & =\emp                                                                       \\
			      L'_{\alpha+1}     & =\Def_{\LL}\left(L'_{\alpha}\right)                                         \\
			      L_{\beta}'        & =\bigcup_{\alpha<\beta}L_{\alpha}'\,\,\,\text{for limit \ensuremath{\beta}} \\
			      C\left(\LL\right) & =\bigcup_{\alpha\in Ord}L'_{\alpha}
		      \end{align*}
	\end{enumerate}
\end{definition}

Note that if $\LL$ is first-order logic, then we get exactly G\"{o}del's
$L$.
It is also easy to see that if $\LL\le\LL'$, then $C(\LL)\subseteq C(\LL')$, but, of course, not conversely.

\begin{fact}[\cite{KMV1} Proposition 2.3]
	For any logic $\LL$, $C\left(\LL\right)$ is a transitive model of
	$\mathrm{ZF}$ containing all the ordinals.
\end{fact}

\begin{fact}[\cite{KMV1} Proposition 2.9] \label{fact:C(L)bounds}
	If the logic $\LL$ is defined by hereditarily ordinal definable parameters, and each of its formulas is hereditarily ordinal definable (with finitely many free variables), then  $C(\LL) \subseteq \hod$.
\end{fact}

\begin{definition}\label{def:absoluteL}
	Suppose $A$ is any class and $T$ is any theory in the language of set theory.
	A logic $\mathcal{L}^*$ is \emph{$T$-absolute} if there are a $\Sigma_1$-predicate $S_1(x)$, a $\Sigma_1$-predicate $T_1(x, y)$, and a $\Pi_1$-predicate $T_1^{\prime}(x, y)$ such that $\varphi \in \mathcal{L}^* \Leftrightarrow S_1(\varphi)$, $M \models \varphi \Leftrightarrow T_1(M, \varphi)$ and $T \vdash \forall x \forall y\left(S_1(x) \rightarrow\left(T_1(x, y) \leftrightarrow T_1^{\prime}(x, y)\right)\right)$.
	If parameters from a class $A$ are allowed, we say that $\mathcal{L}^*$ is absolute \emph{with parameters from $A$}.

	Furthermore, $\mathcal{L}^*$ has \emph{$T$-absolute syntax} if its sentences are (coded as) natural numbers and there is a $\Pi_1$-predicate $S_1^{\prime}(x)$ such that $T \vdash \forall x\left(S_1(x) \leftrightarrow S_1^{\prime}(x)\right)$.
	We may allow parameters as above.
\end{definition}

\begin{fact}[\cite{KMV1} Theorem 3.4]\label{fact:absoluteL}
	Suppose $\mathcal{L}^*$ is $\zfc$-absolute with parameters from $L$, and the syntax of $\mathcal{L}^*$ is $\zfc$-absolute with parameters from $L$. Then $C\left(\mathcal{L}^*\right)=L$.
\end{fact}

Recall that an \emph{admissible set} is a transitive set $X$ such that  $\left<X,\in \right>$ satisfies $\mathrm{KP}$ -- the Kripke-Platek axioms of set theory.

\begin{definition}\label{def:adequate}
	A logic $\mathcal{L}^*$ is \emph{adequate to truth in itself}  if for all finite vocabularies $\tau$ there is function $\varphi \mapsto\ulcorner\varphi\urcorner$ from all formulas $\varphi(x_1, \ldots, x_n) \in \mathcal{L}^*$ in the vocabulary $\tau$ into $\omega$, and a formula $\operatorname{Sat}_{\mathcal{L}^*}(x, y, z)$ in $\mathcal{L}^*$ such that:
	\begin{enumerate}
		\item The function $\varphi \mapsto\ulcorner\varphi\urcorner$ is one-to-one and has a recursive range.

		\item For all admissible sets $M$, formulas $\varphi$ of $\mathcal{L}^*$ in the vocabulary $\tau$, structures $\mathcal{N} \in M$ in the vocabulary $\tau$, and $a_1, \ldots, a_n \in N$, the following conditions are equivalent:
		      \begin{enumerate}
			      \item $\langle M,\in \rangle \models \operatorname{Sat}_{\mathcal{L}^*}\left(\mathcal{N},\ulcorner\varphi\urcorner,\left\langle a_1, \ldots, a_n\right\rangle\right)$
			      \item $\mathcal{N} \models \varphi\left(a_1, \ldots, a_n\right)$.
		      \end{enumerate}

	\end{enumerate}
	We may admit ordinal parameters in this definition.
\end{definition}

\begin{fact}[\cite{KMV1} Proposition 2.7]\label{fact:AC}
	If $\mathcal{L}^*$ is adequate to truth in itself, then $C\left(\mathcal{L}^*\right)$ satisfies the Axiom of Choice.
\end{fact}

\section{Initial results}\label{sec:initial}

The standard example of the use of the $\Delta$-operation is the result of \cite{Barwise-Axioms} to the effect that $\Delta(\LL(Q_0))=\LL_{\mbox{\tiny HYP}}$,
where $Q_{0}$ is the quantifier asserting ``there are $\geq \aleph_{0}$ many...'' and
$\LL_{\mbox{\tiny HYP}}$ is the smallest admissible fragment of $\LL_{\omega_1\omega}$.
This result implies that $\LL(Q_0)\ne\LL_{\mbox{\tiny HYP}}$ (as Barwise \cite{Barwise-Axioms} shows in the proof of Corollary 4.3).
By fact \ref{fact:absoluteL} $C(\LL(Q_0))=C(\LL_{\mbox{\tiny HYP}})=L$, as both $\LL(Q_0)$ and $\LL_{\mbox{\tiny HYP}}$ are absolute logics (by \cite{Barwise-Absolute}).
In conclusion, $\Delta(\LL(Q_0))\ne\LL(Q_0)$ but $C(\Delta(\LL(Q_0)))=C(\LL(Q_0))$.

Another  example of this phenomenon is second order logic $\LL^2$. Assuming the Axiom of Choice, the inner model $C(\LL^2)$ is equal to $\hod$ (\cite{MyhillScott}),  and therefore also  $C(\Delta(\LL^2))$ is equal to $\hod$ by Fact \ref{fact:C(L)bounds}
(note that the definition of $\Delta(\LL^{2})$ doesn't require any parameters which aren't hereditarily ordinal definable).
Hence $C(\Delta(\LL^2))=C(\LL^2)$, but $\Delta(\LL^2)\ne\LL^2$ by \cite{Craig-Satisfaction1965} (which essentially shows that the satisfaction relation for $\LL^{2}$ is a counterexample).
Note that $C(\LL^2)$ may be a proper subclass of  $\hod$ in the absence of AC (\cite{MR472523}).

Thus we have established:
\begin{proposition}
	There are logics $\LL$ such that $\Delta(\LL)\ne\LL$ but provably, $C(\Delta(\LL))=C(\LL)$.
\end{proposition}

We now show that there are logics where the $\Delta$-operation might not preserve the inner model $C(\LL)$.
Let $H$ be the Henkin quantifier \cite{Henkin-infiniteform}
\begin{equation}\label{eq:henqin-qua}
	Hxyuv\ \phi(x,y,u,v,\vec{w}) =	\left(
	\begin{array}{cc}
			\forall x & \exists y \\
			\forall u & \exists v
		\end{array}\right)\phi(x,y,u,v,\vec{w})
\end{equation}
the meaning of which is
\begin{equation}\label{eq:henkin-so}
	\exists f\exists g
	\forall x\forall u
	\,\phi(x,f(x),u,g(u),\vec{w}).
\end{equation}
Let us use $C(H)$ to denote the inner model $C(\LL(H))$. Recall that  $\hod_1$ is defined as $C(\Sigma^1_1)$ (\cite{KMV1}), where $\Sigma^1_1$ is  existential second order logic, i.e. the logic of formulas of second order logic which start with an existential second order quantifier and then continue with a first order formula.
\begin{theorem} \label{thm:HenkinHOD1}
	$C(H)=\hod_1$.
\end{theorem}

\begin{proof}
	By \cite[Corollary 4.6]{Walkoe},
	in the presence of a pairing function
	any $\Sigma^1_1$-formula can be expressed in terms of the Henkin quantifier $H$.
	This implies $\hod_1\subseteq C(H)$.
	Conversely, any formula starting with $H$ can be translated via (\ref{eq:henkin-so}) into a $\Sigma^1_1$-form.
	This makes it possible to prove by induction on
	the structure of the  formula defining a set $X \in C(H)$
	that $C(H)\subseteq \hod_1$.
	For example if $X$ is defined using $\neg Hxyuv\ \phi$,
	then its complement relative to some level of the construction is, by induction, in $\hod_{1}$,
	and so $X$ will enter $\hod_{1}$ in the next level.
\end{proof}

Since our $\Delta$-operation is  many-sorted, $\Delta(\LL(H))$ contains full second order logic \cite{AKL-Henkin}. The logic $\LL(H)$ is not $\Delta$-closed, i.e. $\Delta(\LL(H))\ne \LL(H)$ (\cite{GostanianHrbacek}), but we do not need this here.
By \cite[Theorem 7.9]{KMV1}  it is  consistent, relative to the consistency of $\mathrm{ZF}$, that  $\hod_1\ne \hod$. Hence:

\begin{theorem}\label{thm:Henkin-DHenkin}
	$Con(\mathrm{ZF})$ implies $Con(C(\LL(H))\ne C(\Delta(\LL(H))))$.
\end{theorem}

\section{The quantifier $Q_{1}$} \label{sec:Q1}

As a second example, let $Q_1$ be the quantifier asserting ``there are $\geq \aleph_1$ many...''. It is well-known that $\Delta(\LL(Q_1))\ne \LL(Q_1)$, as demonstrated by the property of an equivalence relation having uncountably many uncountable classes (see \cite[Chapter II section 7.2]{MT-Logics}).
This raises the question, what is the relationship between $C(\LL(Q_{1}))$ and
$C(\Delta(\LL(Q_{1})))$?

\begin{theorem}\label{thm:DeltaQ1}
	$Con(\mathrm{ZF})$ implies $Con(C(\LL(Q_{1}))\ne C(\Delta(\LL(Q_{1}))))$.
\end{theorem}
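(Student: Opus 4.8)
The plan is to obtain the desired model by forcing over $L$ and to separate the two inner models by exhibiting a single set $S$ that lands in $C(\Delta(\LL(Q_1)))$ precisely because of the non-absoluteness of the $\Delta$-syntax, while remaining outside $C(\LL(Q_1))$. The guiding idea is that $\Delta(\LL(Q_1))$, through the reduct (projective) operation, can express a relativised version of the property ``the equivalence relation $E$ has uncountably many uncountable classes'': for a parameter $a$ coding a structure $\M_a$ with a relation $E_a$, let $\Theta(a)$ say that $E_a$ has uncountably many uncountable classes. If $\Theta$ is $\Delta(\LL(Q_1))$-definable uniformly in $a$, then the set $S=\{a\in C_\alpha:\Theta(a)\}$ enters $C(\Delta(\LL(Q_1)))$ at the next stage of the construction, for a suitable level $C_\alpha$. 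The whole strategy is then to arrange, by a carefully chosen forcing, that the pattern of truth values of $\Theta$ encodes an object invisible to $\LL(Q_1)$.

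Concretely, I would work over $L$ and use a forcing $\p$ — an Easton-style product or iteration of L\'evy collapses $\mathrm{Col}(\omega,\gamma)$ — designed so that, for a sequence $\langle\M_a:a\in C_\alpha\rangle$ of structures definable inside $L$, the generic decides for each $a$ whether the classes of $E_a$ remain uncountable or are collapsed to be countable. The effect is that in $V[G]$ the truth value of $\Theta(a)$ traces out a generic set $S\subseteq\omega_1$. Two points must be checked. First, the pair $(\phi,\psi)$ of $\LL(Q_1)$-formulas witnessing the $\Delta$-definability of $\Theta$ — a refinement of the well-known ``uncountably many uncountable classes'' template, tuned so that its complement side is $\Sigma(\LL(Q_1))$-definable only under the cardinal configuration the collapse produces — must genuinely satisfy the defining equivalence (\ref{delta}) \emph{in $V[G]$}. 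This is exactly the non-absolute case of (\ref{delta}) promised in the introduction: whether a model is a reduct of a model of $\phi$ or of $\psi$ is computed using the real $\aleph_1$ of $V[G]$, and the forcing is set up so that the dichotomy holds after collapsing though it would fail in $L$. Second, one locates a level $C_\alpha$ at which all the $a$ and the coding structures $\M_a$ are present, so that $S$ is $\Delta(\LL(Q_1))$-definable over $(C_\alpha,\in)$ with parameters and hence $S\in C(\Delta(\LL(Q_1)))$.

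The main obstacle, and the real content of the theorem, is the complementary claim that $S\notin C(\LL(Q_1))$. For this I would show that $\LL(Q_1)$-definability is too coarse to recover the generic pattern: the plan is to prove that $C(\LL(Q_1))$ is contained in an inner model omitting $S$ — ideally $C(\LL(Q_1))=L$, or at least $C(\LL(Q_1))\subseteq L[x]$ for some $x$ not coding $S$. The key lemma is an absoluteness statement: over each level arising in the $\LL(Q_1)$-construction, the value of $Q_1 x\,\psi(x)$ can be computed without reference to the particular generic information carried by $S$, because $Q_1$ measures only the single threshold $\geq\aleph_1$ and cannot iterate the counting across classes — precisely the expressive gap between $\LL(Q_1)$ and $\Delta(\LL(Q_1))$. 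Controlling the entire transfinite $\LL(Q_1)$-construction in $V[G]$ and showing the generic set never becomes definable is where the difficulty concentrates; a homogeneity or mutual-genericity property of $\p$ should be the right tool, guaranteeing that no $\LL(Q_1)$-formula with constructible parameters pins down $S$. Finally, as emphasised in the introduction, the argument is purely about which sets are definable and so does not require verifying AC in either inner model.
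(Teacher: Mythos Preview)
Your proposal has the right overall shape---force over $L$, encode a generic set via a $\Delta(\LL(Q_1))$-property, argue the encoded set is not in $C(\LL(Q_1))$---but it misallocates the difficulty and leaves the one substantive step undone.

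First, the part you call ``the main obstacle'' is in fact trivial: it is a theorem of \textrm{ZF} (proved in \cite[section~3]{KMV1}) that $C(\LL(Q_1))=L$ in every model. So the moment you have \emph{any} set $S\notin L$ inside $C(\Delta(\LL(Q_1)))$, you are done; no homogeneity or absoluteness analysis of the $\LL(Q_1)$-hierarchy is needed. The real content of the theorem is the other direction: producing a concrete pair $(\phi,\psi)$ of $\LL(Q_1)$-sentences for which the $\Delta$-equivalence~(\ref{delta}) holds in the forcing extension and which detects generic information. Here your proposal is only a sketch. The property ``$E$ has uncountably many uncountable classes'' is \emph{provably} $\Delta(\LL(Q_1))$ (both it and its negation are easily $\Sigma(\LL(Q_1))$ in \textrm{ZF}), so it is not a non-absolute instance of~(\ref{delta}) at all; and your L\'evy-collapse idea for making its truth value encode a generic is left entirely unspecified---you would need to say exactly which $L$-structures $\M_a$ you use and why the resulting pattern is not already in $L$, and collapsing changes $\omega_1$, which complicates everything.

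The paper's mechanism is quite different and concrete. One fixes in $L$ a sequence $\langle T_n:n<\omega\rangle$ of Suslin trees with pairwise Suslin products, adds a Cohen real $a\subseteq\omega$, then forces a branch through $T_i$ for $i\in a$ and specialises $T_j$ for $j\notin a$, and finally forces $\mathsf{MA}$. The point is that under $\mathsf{MA}$ every Aronszajn tree is special, so ``$T$ is not Aronszajn'' becomes $\Delta(\LL(Q_1))$: it is $\Sigma$ via ``$T$ has an uncountable branch'' and $\Pi$ via ``$T$ is not special''. This is exactly the promised non-absolute instance of~(\ref{delta})---the equivalence fails in $L$ but holds in the final model. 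Hence $a=\{i:T_i\text{ is not Aronszajn}\}\in C(\Delta(\LL(Q_1)))$, while $a\notin L=C(\LL(Q_1))$.
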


The proof will use the following machinery.
Recall that an \emph{Aronszajn tree} is a tree of height $\omega_1$ such that all its levels are countable, and it has no uncountable chain (a \emph{branch});
a \emph{Suslin tree} is an Aronszajn tree which has no uncountable antichain;
and an Aronszajn tree $S$ is called \emph{special} if there is a \emph{specializing function} -- a function $c:S \to A$ such that $A$ is countable, and for every  $q,p \in S$,  if $c(q)=c(p)$ then  $q,p$ are incompatible in $S$.
Note that a special Aronszajn tree must have an uncountable chain, so cannot be Suslin.
In general, there might be Aronszajn trees that are neither special nor Suslin.

A Suslin tree can be considered as a forcing notion which has the countable chain condition (ccc), and adds a generic uncountable chain to itself.
Furthermore, if $S,T$ are Suslin trees such that the tree product $S \otimes T$ is Suslin as well, then forcing with one preserves the Suslinity of the other.
Note that, since the tree product $S \otimes T$ is dense in the forcing product $S \times T$, this corresponds to the following well-known fact (see \cite[Lemma 16.6]{Jech}):
\begin{fact}\label{fact:ccc-prod}
	If $P$ and $Q$ are ccc forcing notions, then $P \times Q$ satisfies ccc  iff $P \Vdash$ ``$\check{Q}$ satisfies ccc'' iff $Q \Vdash$ ``$\check{P}$ satisfies ccc''.
\end{fact}

On the other hand,
given an $\omega_{1}$ tree $T$, we denote by $\mathcal{S}(T)$ the standard forcing notion consisting of finite partial specializing functions from $T$ to $\omega$. 
This forcing notion, defined in \cite{Baumgartner.Malitz.ea-EmbeddingTreesRationals1970}, specializes $T$,
 and if $T$ is Aronszajn, then  $\mathcal{S}(T)$ satisfies ccc.
Hence, if $T,S$ are Suslin trees such that  $T \otimes S$ is Suslin (i.e. ccc), then in particular $T \Vdash$ ``$\check{S}$ is Aronszajn'',
hence  $T \Vdash $ ``$\mathcal{S}(\check{S})$ is ccc'' (note that the definition of $\mathcal{S}(S)$ is absolute between models of set theory).
 By Fact \ref{fact:ccc-prod} this implies
that $\mathcal{S}(S) \Vdash$ ``$\check{T}$ satisfies ccc'' (i.e. it is Suslin).

To summarize, we have the following properties:
\begin{lemma}\label{lem:independent-trees}
	If $T,S$ are Suslin trees such that  $T \otimes S$ is Suslin, then
	\begin{enumerate}
		\item $T \Vdash$ ``$\check{S}$ is Suslin, $\check{T}$ is not Aronszajn'';
		\item $\mathcal{S}(T) \Vdash$ ``$\check{S}$ is Suslin, $\check{T}$ is special''.
	\end{enumerate}
\end{lemma}

Suslin trees satisfying the conclusion of the lemma are called \emph{independent}.
%
%
%
%
By \cite[Theorem 7.3]{abraham-shelah}, there is a sequence of Suslin trees $\mathcal{T}=\left\langle T_{n}\mid n<\omega\right\rangle \in L$ such that for every distinct $i_1,...,i_k \in \omega$, the product tree $T_{i_1}\otimes ... \otimes T_{i_k}$ is Suslin.
To summarize, for every $i \in \omega$:
\begin{itemize}
	\item $T_{i} \Vdash $ ``$\check{T}_{i}$ is not Aronszajn'';
	\item $\mathcal{S}( T_{i} ) \Vdash$ ``$\check{T}_{i}$ is special'';
    \item $T_{i}, \mathcal{S}(T_{i}) \Vdash $ ``for every distinct $i_1,...,i_k \in \omega \smallsetminus \{i\}$, the product tree $T_{i_1}\otimes ... \otimes T_{i_k}$ is Suslin''.
\end{itemize}
So, all in all, we can either force a branch or specialize $T_{i}$
without affecting the Suslinity of any of the other $T_{j}$s or the independence between them.
Note that all posets used, as well as their finite products, are ccc, and as a consequence a finite support product of them preserves $\omega_{1}$.

\begin{proof}[Proof of theorem \ref{thm:DeltaQ1}]
	By \cite[section 3]{KMV1}, $\zf \vdash C(\LL(Q_{1}))=L$.
	So we wish to construct a model in which $C(\Delta(\LL(Q_{1})))\ne L$.

	Let $a \subset \omega$ be a Cohen real over $L$.
	Using a finite support product, we force over $L[a]$ to add a branch to any $T_{i}$ for $i \in a$ (using $T_{i}$ itself) and specialize any $T_{j}$ for $j\notin a$ (using $\mathcal{S}(T_{j})$).
	As we noted, this product preserves $\omega_{1}$.
	Then we force Martin's Axiom ($\mathsf{MA}$) in the standard way (as in \cite{Solovay.Tennenbaum-IteratedCohenExtensions1971}) over the resulting model,
	and	denote the final model $W$.
	Note that still, $\omega_{1}^W = \omega_{1}^{L}$.
	We now work in $W$.
	As a consequence of $\mathsf{MA}$, any Aronszajn tree in $W$ is special (\cite{Baumgartner.Malitz.ea-EmbeddingTreesRationals1970}), and we have 
	\[
		a=\left\{ i<\omega\mid T_{i}\text{ is not Aronszajn}\right\}.
	\]

	\begin{claim}
		$a\in C(\Delta(\LL(Q_{1})))$.
	\end{claim}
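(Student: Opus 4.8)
The plan is to show that the set $a$ has a $\Delta(\LL(Q_1))$-definition that does not depend on the Cohen real or the generic branches directly, but only on the invariant property of the trees $T_i$ being Aronszajn or not, which is correctly captured inside the inner model. The key observation is the equivalence established in $W$, namely that $a=\{i<\omega \mid T_i \text{ is not Aronszajn}\}$. Since each $T_i \in L \subseteq C(\Delta(\LL(Q_1)))$, it suffices to express, in a $\Delta(\LL(Q_1))$-definable way, the predicate ``$T_i$ is not Aronszajn.'' A tree on $\omega_1$ fails to be Aronszajn exactly when it either has an uncountable branch or an uncountable level. The plan is to encode each alternative as a model-theoretic property expressible via $\Sigma(\LL(Q_1))$ and $\Pi(\LL(Q_1))$.

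First I would fix a vocabulary describing a tree together with auxiliary sorts, and write down the two $\Sigma(\LL(Q_1))$-definitions. For the $\Sigma$-side, ``$T_i$ is \emph{not} Aronszajn'' is naturally existential: one asserts (after passing to a larger, possibly many-sorted vocabulary) the existence of an uncountable branch or an uncountable level, where uncountability is captured by $Q_1$. For the $\Pi$-side I would argue that ``$T_i$ is Aronszajn'' is also $\Sigma(\LL(Q_1))$-definable. Here is where the specialization half of the forcing enters crucially: in $W$, by $\mathsf{MA}$, every Aronszajn tree is special, so ``$T_i$ is Aronszajn'' is equivalent to ``$T_i$ has height $\omega_1$, has all levels countable, and admits a specializing function $f:T_i\to\omega$'', and the existence of such a specialization is again an existential (hence $\Sigma(\LL(Q_1))$) assertion over an expanded vocabulary, with the countability-of-levels clauses expressible using $\neg Q_1$. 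Thus both ``$T_i$ not Aronszajn'' and its negation are $\Sigma(\LL(Q_1))$-definable, which is exactly $\Delta(\LL(Q_1))$-definability of the predicate.

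Assembling these, I would define $a$ inside $C(\Delta(\LL(Q_1)))$ by recursion: the trees $\mathcal{T}=\langle T_n\mid n<\omega\rangle$ lie in $L$ and hence appear at some level of the $C(\Delta(\LL(Q_1)))$-hierarchy, and the $\Delta(\LL(Q_1))$-formula deciding membership $i\in a$ then picks out $a$ as a definable subset of $\omega$, placing it in $C(\Delta(\LL(Q_1)))$. The reason the correct value of $a$ is computed is that the relevant witnesses---branches, large levels, or specializing functions---are absolute enough, or are coded by objects already present, so that the $\Delta(\LL(Q_1))$-evaluation performed inside the inner model agrees with the truth in $W$.

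The main obstacle, which I expect to require care, is the verification that the $\Sigma$-definition of ``$T_i$ is Aronszajn'' (via existence of a specialization) is genuinely correct in $W$ rather than merely consistent: this is precisely the point where one uses that $\mathsf{MA}$ makes \emph{every} Aronszajn tree special, so that there is no gap between ``Aronszajn'' and ``specializable.'' A secondary subtlety is the interaction with the $\Delta$-operation's set-theoretic syntax flagged in the introduction: the equivalence~(\ref{delta}) defining the relevant $\Delta(\LL(Q_1))$-formula must hold in $V$ (here, in $W$), and I would need to confirm that the pair of $\Sigma$-definitions are genuinely complementary across \emph{all} models of the vocabulary, not only for the specific trees $T_i$---this is why framing both directions as uniform model-class definitions, rather than statements about the particular $T_i$, is essential.
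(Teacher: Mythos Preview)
Your proposal is correct and follows essentially the same route as the paper: exhibit ``$T_i$ is not Aronszajn'' as $\Sigma(\LL(Q_1))$ via the existence of an uncountable branch, and ``$T_i$ is Aronszajn'' as $\Sigma(\LL(Q_1))$ via the existence of a specializing map, the latter being justified exactly by $\mathsf{MA}$ in $W$; then read off $a$ over a level $L'_\alpha$ of the $C(\Delta(\LL(Q_1)))$-hierarchy containing $\mathcal{T}$. The one cosmetic difference is that you fold the ``uncountable level'' disjunct into the $\Sigma$-side, whereas the paper first isolates ``$T$ is an $\aleph_1$-tree'' as a first-order condition over $L'_\alpha$ (using $\omega_1^L=\omega_1^W$) and then works entirely within $\aleph_1$-trees, where not-Aronszajn reduces simply to having an uncountable branch; both packagings yield the same $\Delta$-pair. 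Your closing worry about complementarity across \emph{all} models is well taken and is precisely why the first-order $\aleph_1$-tree clause (or your level clause) must be built into the pair, but once that is done the dichotomy ``uncountable branch'' versus ``special'' is genuinely exhaustive in $W$ by $\mathsf{MA}$, as you note.
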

	\begin{proof}
		Assume that, in the notation of Definition \ref{def:C(L)},
		$C(\Delta(\LL(Q_{1})))=\bigcup_\alpha L_{\alpha}'$, and let $\alpha>\omega_1$ be such that $\mathcal{T}\in L_{\alpha}'$.
		We can assume that  $T_{i}\con\omega_{1}$ for all $i$.
		Then $a$ is the set of all $i<\omega$ such that $L_{\alpha}'\models$``$T_{i}$ is not an Aronszajn tree'', so we need to show that ``$T\con\omega_{1}$ is a non-Aronszajn tree'' is expressible in  $\Delta(\LL(Q_{1}))$ over $L_{\alpha}'$.
		First note that if $T\in L_{\alpha}'$ and $T\con\omega_{1}$ then ``$T$ is an $\aleph_{1}$ tree'' is first order definable, and since $\omega_{1}^{L}=\omega_{1}^{W}$, $L_{\alpha}'$ is correct about this.
		Now by definition, an $\aleph_{1}$ tree is not Aronszajn iff it has a cofinal branch iff it has an uncountable branch.
		Having an uncountable branch is $\Sigma(\LL(Q_{1}))$:
		\[
			\exists B\left(\forall x,y\in T(Bx\land By\to(x\leq y\lor y\leq x))\land Q_{1}xBx\right).
		\]
		Since by $\mathsf{MA}$ all Aronszajn trees are special, an $\aleph_{1}$-tree is not Aronszajn also iff it is not special. But being a special tree is $\Sigma(\LL(Q_{1}))$:
		\footnote{Here we use $\omega$ as a parameter from $L'_{\alpha}$,
		but this can be avoided as we have $Q_1$ at our disposal, and all we need is that the range of $F$ is countable.}
		\[
			\exists F:T\to\omega\left(\forall x,y\in T(f(x)=f(y)\to x\bot y)\right).
		\]
		So under $\mathsf{MA}$, the property of being an Aronszajn tree is $\Delta(\LL(Q_{1}))$. Hence indeed, in $W$,  $a \in C(\Delta(\LL(Q_{1})))$.
	\end{proof}
	To conclude, as $a$ is Cohen generic over $L$, we obtain
	\[W \models C(\LL(Q_{1}))\ne C(\Delta(\LL(Q_{1}))). \qedhere
	\]
\end{proof}

\section{Symbiosis} \label{sec:symbiosis}

As a next step it would be natural to ask whether certain properties of a logic $\LL$ might imply that  $C(\LL)=C(\Delta(\LL))$ even though $\LL \ne \Delta(\LL)$.
One such candidate is the notion of symbiosis \cite{Vaananen-AbstractI}.
The idea of symbiosis is to establish a close connection between an abstract logic and predicates of set theory.
The motivation for this is the fact that many abstract logics seem to depend to a great extent on set theory, so it makes sense to ask exactly how strong is this dependence.
A paramount example is second order logic and its symbiosis with the power-set operation of set theory.
\begin{definition}\label{def:symbiosis}
	A set-theoretic relation $R$ and a logic $\LL^*$ are \emph{symbiotic} if the following conditions are satisfied:
	\begin{enumerate}
		\item Every $\LL^*$-definable model class is $\Delta_1(R)$-definable.
		\item Every $\Delta_1(R)$-definable model class is $\Delta(\LL^*)$-defin\-able.
	\end{enumerate}
\end{definition}

\noindent As noted in \cite{BagariaVaananen-Symbiosis}, the second clause can be replaced with
\begin{enumerate}
	\item[$\text 2^{*}$.]  The model class
	      \[
		      Q_{R}:=\left\{ \A\mid\A\cong\left(M,\in,\vec{a}\right),M\text{ transitive and }R(\vec{a})\right\}
	      \]
	      is $\Delta(\LL^*)$-definable.
\end{enumerate}

\begin{proposition}\label{prop:notsymbiotic}
	$\LL(Q_{1})$ is not symbiotic with any set theoretic predicate.
\end{proposition}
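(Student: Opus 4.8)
The plan is to violate clause (2) of Definition~\ref{def:symbiosis} uniformly in $R$: I would exhibit a single model class that is $\Delta_1(R)$-definable for \emph{every} set-theoretic predicate $R$, yet is provably not $\Delta(\LL(Q_{1}))$-definable. Since clause (2) demands that every $\Delta_1(R)$-definable class be $\Delta(\LL(Q_{1}))$-definable, this one witness rules out symbiosis with any $R$ at once. The witness I would use is the class
\[
	WF=\left\{(A,E)\mid E\text{ is a well-founded binary relation on }A\right\}
\]
in the vocabulary consisting of a single binary relation symbol $E$.

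First I would check that $WF$ is $\Delta_1$-definable over $(V,\in)$, with no appeal to $R$. Well-foundedness of $(A,E)$ is $\Sigma_1$, being equivalent to the existence of a rank function, i.e.\ a function $\rho$ from $A$ into the ordinals with $xEy\to\rho(x)<\rho(y)$; and it is simultaneously $\Pi_1$, being equivalent to the assertion that every nonempty subset of $A$ has an $E$-minimal element. Hence $WF$ is $\Delta_1$, and as $\Delta_1\subseteq\Delta_1(R)$ for every $R$, the class $WF$ is trivially $\Delta_1(R)$-definable. Consequently, if some $R$ were symbiotic with $\LL(Q_{1})$, clause (2) would force $WF$ to be $\Delta(\LL(Q_{1}))$-definable.

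The heart of the argument is then to show that $WF$ is \emph{not} $\Delta(\LL(Q_{1}))$-definable, and here I would invoke compactness. By the classical completeness/compactness theorem for $\LL(Q_{1})$, this logic is countably compact, and the $\Delta$-operation preserves compactness properties (as recalled above, following \cite{MSS-Delta}), so $\Delta(\LL(Q_{1}))$ is countably compact as well. Suppose toward a contradiction that a sentence $\theta$ of $\Delta(\LL(Q_{1}))$ defined $WF$. Expanding the vocabulary by new constants $c_0,c_1,c_2,\dots$, consider the countable theory $\{\theta\}\cup\{c_{n+1}Ec_n\mid n<\omega\}$. Every finite subset is satisfiable, since any finite (hence well-founded) structure carrying an $E$-descending chain of the required length is a model of $\theta$; by countable compactness the whole theory has a model, which satisfies $\theta$ yet carries an infinite $E$-descending chain $c_0,c_1,c_2,\dots$, contradicting the well-foundedness of $E$. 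Thus $WF$ is not definable in $\Delta(\LL(Q_{1}))$, completing the contradiction.

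I expect the main obstacle to be conceptual rather than computational: recognising that one should attack clause (2) through the single universal witness $WF$, whose $\Delta_1$-definability holds for all $R$ simultaneously precisely because well-foundedness needs only $\in$, rather than analysing each candidate predicate $R$ or routing through the reformulation $2^{*}$ with $Q_R$ (which would entangle the argument with the possibly meagre supply of transitive models satisfying $R$). Once $WF$ is isolated, the only delicate point is the legitimacy of the compactness argument for the abstract logic $\Delta(\LL(Q_{1}))$; I would verify carefully that $\Delta(\LL(Q_{1}))$ genuinely inherits countable compactness from $\LL(Q_{1})$ and that it permits expansion of the vocabulary by new constant symbols, both of which follow from its being an abstract logic with the same compactness behaviour as $\LL(Q_{1})$. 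It is worth noting that the argument in fact shows more: no countably compact logic whose $\Delta$-extension remains countably compact can be symbiotic with any set-theoretic predicate.
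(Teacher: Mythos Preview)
Your proposal is correct and follows essentially the same route as the paper: both arguments isolate a well-foundedness witness that is $\Delta_1$ (hence $\Delta_1(R)$ for every $R$) and then kill its $\Delta(\LL(Q_1))$-definability via countable compactness, which $\Delta(\LL(Q_1))$ inherits from $\LL(Q_1)$. The only cosmetic difference is that the paper phrases the witness as the well-ordering quantifier $\W$ and invokes the cited fact from \cite{Vaananen-AbstractI} that symbiosis forces $\W$ to be $\Delta(\LL)$-definable, whereas you work directly with the model class $WF$ of well-founded binary relations and verify its $\Delta_1$-status by hand; your version is thus a bit more self-contained, but the underlying idea is identical.
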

\begin{proof}
	Symbiosis implies that the well-ordering quantifier $\W $, defined by
	\[
		\mathcal{M} \models \W xy \varphi(x,y) \iff \{ (a,b)\in M^{2} \mid \mathcal{M} \models \varphi(a,b) \} \text{ is a well-order},
	\]
	is $\Delta(\LL)$-definable (cf. \cite[pg. 400]{Vaananen-AbstractI}).
	But the $\Delta$-operation preserves compactness (Fact \ref{fact:D-ext-properties}),
	and $\LL(Q_{1})$ is $\aleph_{0}$-compact (see e.g. \cite[II.3]{MT-Logics}),
	so $\Delta(\LL(Q_{1}))$ is $\aleph_{0}$-compact as well,
	which in particular implies that the well-ordering quantifier is not definable.
	So $\LL(Q_{0})$ cannot be symbiotic with any set-theoretic relation.
\end{proof}

However this can be easily amended by adding $\W $ to the logic, since $C(\LL(Q_{1},\W ))=L$ as well (again by \cite[section 3]{KMV1}), and we do have:

\begin{proposition}
	The logic $\LL(Q_{1},\W )$ is symbiotic with the predicate
	$R_{\aleph_{1}}$ (where $R_{\aleph_{1}}(a)$ $\iff$ $\left|a\right|\ge\aleph_{1}$).
\end{proposition}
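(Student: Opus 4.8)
The plan is to verify the two clauses of Definition~\ref{def:symbiosis} for $\LL^{*}=\LL(Q_{1},\W )$ and $R=R_{\aleph_{1}}$, using the reformulation $2^{*}$ for the second clause. The guiding correspondence throughout is that the quantifier $Q_{1}$ matches the predicate $R_{\aleph_{1}}$ (``is uncountable''), while the quantifier $\W $ matches well-foundedness, which is already $\Delta_{1}$ in $\zf$ for set relations.

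For clause~1, I would argue by induction on the structure of an $\LL(Q_{1},\W )$-formula $\phi$ in a vocabulary $\tau$ that the relation ``$\M\models_{s}\phi$'' (with $\M$ a $\tau$-structure coded as a set and $s$ an assignment) is $\Delta_{1}(R_{\aleph_{1}})$. Atomic formulas are $\Delta_{0}$, and the Boolean connectives and the first-order quantifiers are handled by the closure of $\Delta_{1}(R_{\aleph_{1}})$ under Boolean operations and under quantification bounded by the (set) domain $\dom\M$. The two new cases are the crux. For $Q_{1}x\,\psi$ one forms the set $D=\{b\in\dom\M\mid\M\models_{s[x/b]}\psi\}$, which is $\Delta_{1}(R_{\aleph_{1}})$-definable from $\M,s$ by the induction hypothesis and Separation, and then ``$\M\models_{s}Q_{1}x\,\psi$'' becomes simply $R_{\aleph_{1}}(D)$, still $\Delta_{1}(R_{\aleph_{1}})$. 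For $\W xy\,\psi$ one forms the relation $E=\{(a,b)\mid\M\models_{s[x/a,y/b]}\psi\}$ and asks whether it is a well-order, i.e.\ a linear order (a $\Delta_{0}$ condition on $E$) that is well-founded; and well-foundedness of a set relation is $\Delta_{1}$, being $\Sigma_{1}$ via the existence of a rank function into an ordinal and $\Pi_{1}$ via ``every nonempty subset has an $E$-minimal element''. Hence the model class defined by $\phi$ is $\Delta_{1}(R_{\aleph_{1}})$.

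For clause~$2^{*}$ I would show that $Q_{R_{\aleph_{1}}}$ is $\Delta(\LL(Q_{1},\W ))$-definable by exhibiting $\Sigma(\LL(Q_{1},\W ))$-definitions of both $Q_{R_{\aleph_{1}}}$ and its complement. A structure $(N,E,c)$, where $c$ names the distinguished element, lies in $Q_{R_{\aleph_{1}}}$ exactly when $(N,E)$ is extensional and well-founded --- so that its transitive collapse is a transitive $(M,\in)$ --- and the collapse of $c$ is uncountable, which, since the collapse is injective on $E$-predecessors, is equivalent to ``$c$ has uncountably many $E$-predecessors''. Extensionality is first order; the cardinality condition is $Q_{1}y\,(yEc)$; and well-foundedness is $\Sigma(\LL(\W ))$ by the reduct trick: expand the vocabulary by a new sort $O$, a binary relation $<$ on $O$ and a function $\rho\colon N\to O$, and take the reduct to $\{E,c\}$ of the models of
\[
\W uv\,(u<v)\ \land\ \forall x,y\,(xEy\to\rho(x)<\rho(y)),
\]
the point being that a rank-monotone map into a well-order exists iff $E$ is well-founded. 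Since $\Sigma(\LL(Q_{1},\W ))$ is closed under conjunction, $Q_{R_{\aleph_{1}}}$ is $\Sigma$-definable. For the complement, a structure fails to be in $Q_{R_{\aleph_{1}}}$ iff it is non-extensional (first order), or ill-founded, or $\neg Q_{1}y\,(yEc)$ holds; ill-foundedness is $\Sigma(\LL)$ already in first-order logic, via a new unary predicate $D$ and the reduct of the models of $\exists x\,Dx\land\forall x\,(Dx\to\exists y\,(Dy\land yEx))$, witnessing a nonempty subset with no $E$-minimal element. As $\Sigma(\LL(Q_{1},\W ))$ is closed under disjunction, the complement is $\Sigma$-definable too, so $Q_{R_{\aleph_{1}}}$ is $\Delta(\LL(Q_{1},\W ))$-definable.

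The main obstacle is precisely the well-foundedness step of clause~$2^{*}$: this is where $\W $ is indispensable, and the argument must route well-foundedness through an auxiliary well-ordered sort (for the $\Sigma$-definition of $Q_{R_{\aleph_{1}}}$) and ill-foundedness through an auxiliary ``descending subset'' predicate (for the $\Sigma$-definition of the complement). This also clarifies, in light of Proposition~\ref{prop:notsymbiotic}, why the bare logic $\LL(Q_{1})$ cannot do the job: without $\W $ one cannot capture well-foundedness, and $\Delta$-definability of $\W $ would contradict the countable compactness of $\LL(Q_{1})$.
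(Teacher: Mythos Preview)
Your proof is correct and, for clause~$2^{*}$, essentially the same as the paper's: both expand by a well-ordered extra sort with a rank-monotone map to capture well-foundedness, and both give a $\Sigma$-description of the complement by disjoining non-extensionality, ill-foundedness, and the negation of the cardinality condition (the paper witnesses ill-foundedness by an order-preserving map from a linear order with no least element, you by a nonempty predicate with no $E$-minimal point; these are interchangeable).

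The genuine difference is in clause~1. The paper does not induct on formulas; instead it uses a reflection/absoluteness argument: for a suitable finite fragment $\zfc_{n}^{-}$, one has $\A\models_{\LL(Q_{1},\W)}\phi$ iff there exists (equivalently, for all) a transitive $M\models\zfc_{n}^{-}$ with $\A\in M$ that is $R_{\aleph_{1}}$-correct and satisfies ``$\A\models\phi$'' internally; transitivity guarantees correctness about $\W$, and $R_{\aleph_{1}}$-correctness guarantees correctness about $Q_{1}$. This packages the whole satisfaction relation in one $\Sigma_{1}(R)$/$\Pi_{1}(R)$ pair and generalizes uniformly to other symbiotic pairs. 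Your inductive argument is more elementary and self-contained, relying only on the closure of $\Delta_{1}(R)$ under Boolean operations, bounded quantification, and $\Delta_{1}$-Separation, together with the $\Delta_{1}$-status of set well-foundedness. Both approaches are standard; yours is arguably cleaner for this specific logic, while the paper's transfers verbatim to other quantifiers once the right correctness predicate is identified.
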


\begin{proof}
	Denote $\LL=\LL(Q_{1},\W )$ and $R=R_{\aleph_{1}}$.
	We verify clauses $1$ and  $2^*$ above.

	\noindent
	1) Every $\LL$-definable model class is $\Delta_{1}(R)$-definable:

	\noindent Let $\K=\left\{ \A\mid\A\models_{\LL} \fii\right\} $ for some $\fii\in\LL$.
	Denote by $\zfc_{n}^{-}$ the axioms of $\zfc$ without the Power Set Axiom, and with the axiom schemata of Separation and Collection restricted to  $\Sigma_{n}$ formulas.
	$\zfc_{n}^{-}$ is finitely axiomatizable.
	Then there is some $n<\omega$ such that the following are equivalent:
	\begin{enumerate}[label=\alph*)]
		\item $\A\in\K$
		\item $\exists M$ $\big($ $M$ is transitive $\land$ $M\models\zfc_{n}^{-}$
		      $\land$ $\forall a\in M$ ( $R(a)$ $\leftrightarrow$  $M\models\left|a\right|\geq\aleph_{1}$
		      ) $\land$ $\A\in M$ $\land$ $M\models$
		      ``$\A\models_{\LL} \fii$'' $\big)$.
		\item $\forall M$ $\big[$$\big($ $M$ is transitive $\land$ $M\models\zfc_{n}^{-}$
				      $\land$ $\forall a\in M$ ( $R(a)$ $\leftrightarrow$ $M\models\left|a\right|\geq\aleph_{1}$
						      ) $\land$ $\A\in M$ $\big)$$\to$ $M\models$ ``$\A\models_{\LL} \fii$''$\big]$
		      .
	\end{enumerate}
	Note that since we consider transitive $M$'s, they are correct about the notion of well-order.

	\noindent
	2) The model class $Q_{R}=\left\{ \A\mid\A\cong(M,\in,a),M\text{ transitive and }|a| \geq \aleph_{1} \right\} $,
	in the language $\left(E,c\right)$ where $E$ is binary and $c$ is a constant, is $\Delta(\LL)$-definable:
	\begin{itemize}
		\item Consider the class of (2-sorted) models $\A=\left(A,B,E^{\A},c^{\A},<^{\A},F^{\A}\right)$
		      such that $E^{\A}$ is a binary relation on $A$ satisfying the Extensionality Axiom, $c^{\A}\in A$,
		      $<^{\A}$ is a well-order on $B$ (definable using $\W $),
		      $F^{\A}:\left(A,E^{\A}\right)\to(B,<^{\A})$ is an order-preserving function (a rank function) and $R(c^{\A})$ holds.
		      It is elementary
		      in $\LL(Q_{1},\W )$, and its projection yields the class of
		      well-founded models $\left(A,E^{\A},c^{\A}\right)$ such that $R(c^{\A})$.
		      The transitive collapse of such a model is of the form $(M,\in,\bar{c})$
		      where $\bar{c}$ is uncountable. So this is exactly $Q_{R}$.
		\item Consider the class of (2-sorted) models $\A=\left(A,B,E^{\A},c^{\A},<^{\A},G^{\A}\right)$
		      such that $E^{\A}$ is a binary relation on $A$, $c^{\A}\in A$,
		      $<^{\A}$ is a binary relation on $B$, $G^{\A}:B\to A$, and either $\neg R(c^{\A})$
		      holds or $E^{\A}$ fails to satisfy the Extensionality Axiom or [$<^{\A}$ is a linear order (on $B$) without least element
				      and $G^{\A}$ is order preserving].
		      It is elementary in $\LL(Q_{1},\W )$ (even $\LL(Q_{1})$\,),
		      and its projection yields exactly the complement model class of ${Q}_{R}$.
		      \qedhere
	\end{itemize}
\end{proof}
To conclude, symbiosis is not enough to imply that the $\Delta$ operation preserves the $\LL$-constructible universe:

\begin{theorem}\label{10}
	There is a logic $\LL$ which is symbiotic with a set theoretic predicate,
	such that  $Con(\zf)$ implies $Con(C(\LL)\ne C(\Delta(\LL)))$.
\end{theorem}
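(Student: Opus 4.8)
The plan is to take $\LL=\LL(Q_{1},\W )$ as the witnessing logic. The two preceding propositions already supply the symbiosis half of the statement: the first shows that $\LL(Q_{1})$ on its own is symbiotic with no set-theoretic predicate, while the second shows that adjoining the well-ordering quantifier repairs this, so that $\LL(Q_{1},\W )$ is symbiotic with $R_{\aleph_{1}}$. It therefore remains only to establish the consistency of $C(\LL(Q_{1},\W ))\ne C(\Delta(\LL(Q_{1},\W )))$, and for this I would reuse, essentially verbatim, the model $W$ built in the proof of Theorem \ref{thm:DeltaQ1}, where a Cohen real $a$ over $L$ is coded into the non-Aronszajn pattern of the Suslin-tree sequence $\T$ and $\mathsf{MA}$ is then forced.

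The argument glues together two facts. First, by \cite[section 3]{KMV1} one has $\zf\vdash C(\LL(Q_{1},\W ))=L$, exactly as for $\LL(Q_{1})$ (this was already noted in the remark preceding the symbiosis proposition); in particular, in $W$ the inner model $C(\LL(Q_{1},\W ))$ equals $L$, which does not contain the Cohen real $a$. Second, since adjoining a quantifier only extends a logic, $\LL(Q_{1})\le\LL(Q_{1},\W )$; a $\Sigma(\LL(Q_{1}))$-definable model class is then $\Sigma(\LL(Q_{1},\W ))$-definable, so $\Delta(\LL(Q_{1}))\le\Delta(\LL(Q_{1},\W ))$, whence $C(\Delta(\LL(Q_{1})))\con C(\Delta(\LL(Q_{1},\W )))$ by the monotonicity of the $C$-operation recorded in the introduction.

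Combining these, the Claim inside the proof of Theorem \ref{thm:DeltaQ1} already shows that $a\in C(\Delta(\LL(Q_{1})))$ holds in $W$, and the inclusion just noted upgrades this to $a\in C(\Delta(\LL(Q_{1},\W )))$. Hence in $W$ we have $a\in C(\Delta(\LL(Q_{1},\W )))\smin C(\LL(Q_{1},\W ))$, so the two inner models are distinct. As $W$ is obtained from $L$ by set forcing, its existence follows from $Con(\zf)$, which yields the theorem.

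I do not anticipate a genuine obstacle here: the mathematical content has been isolated in the two symbiosis propositions and in Theorem \ref{thm:DeltaQ1}, and the present statement is essentially their conjunction, fused by the trivial monotonicity $\LL\le\LL'\Rightarrow C(\Delta(\LL))\con C(\Delta(\LL'))$. The only point deserving a word of care is the verification that $C(\LL(Q_{1},\W ))=L$ really holds for the extended logic and not merely for $\LL(Q_{1})$ — but this is delivered by the same \cite{KMV1} argument, so no new work is needed.
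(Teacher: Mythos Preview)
Your proposal is correct and follows essentially the same approach as the paper: take $\LL=\LL(Q_{1},\W)$, invoke the preceding proposition for symbiosis, use the monotonicity $\Delta(\LL(Q_{1}))\le\Delta(\LL(Q_{1},\W))$ to transport the Cohen real $a$ from the model $W$ of Theorem~\ref{thm:DeltaQ1} into $C(\Delta(\LL(Q_{1},\W)))$, and note $C(\LL(Q_{1},\W))=L$ by \cite[section 3]{KMV1}. The paper's proof is terser but records exactly these ingredients.
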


\begin{proof}
	Let $\LL^* = \LL(Q_1, \W )$.
	Note that $\Delta(\LL(Q_1)) \subset \Delta(\LL^*)$.
	So, as in the proof of Theorem \ref{thm:DeltaQ1}, we can force over $L$ to obtain a Cohen real $a$ and a forcing extension of $L$, say $W'$, such that $a \in C(\Delta(\LL^*))^{W'}$, while still $C(\LL^*)^{W'} = L$.
\end{proof}

\section{\texorpdfstring{$0^{\sharp}$ and the $\Delta$-operation}{0-sharp and the Δ-operation}} \label{sec:0sharp}

\sloppy Large cardinals give an even stronger result than the above Theorem~\ref{10}.
Let us denote $\Delta(\LL(Q_{1},\W ))$ by $\dqw$ (and similarly for $\Sigma,\Pi$) and $C(\Delta(\LL(Q_{1},\W )))$ by $\cqw$. We are indebted to Menachem Magidor for suggesting the following observation:
\begin{theorem}\label{thm:0sharpQ1W}
	If $0^{\sharp}$ exists, then  $0^{\sharp}\in \cqw$.
\end{theorem}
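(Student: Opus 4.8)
The plan is to show that the real $0^{\sharp}$ is $\dqw$-definable, as a subset of $\omega$, over some initial segment $L'_{\alpha}$ of the construction of $\cqw$; then $0^{\sharp}$ enters $L'_{\alpha+1}$ and hence lies in $\cqw$. Since $L=C(\lqw)\subseteq\cqw$ and since each level satisfies $L_{\alpha}\subseteq L'_{\alpha}$, the set $\omega$ and a coding of formulas are available at an early level, so it suffices to express ``$n\in 0^{\sharp}$'' by a pair of complementary $\sqw$-formulas over $L'_{\alpha}$. I would work from the Ehrenfeucht--Mostowski characterization: $0^{\sharp}$ is the unique set $T\subseteq\omega$ coding a \emph{remarkable, well-founded} EM set of formulas in the language $\{\in\}$ with added indiscernible constants.

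First I would separate the elementary clauses from the non-elementary one. That $T$ is a complete consistent theory extending the $V=L$ axioms, obeys the indiscernibility scheme, and is remarkable are all arithmetic in $T$, hence first-order over a structure containing $\omega$ with $T$ posited as a predicate; these present no difficulty. The only genuinely non-elementary clause is well-foundedness of $T$, i.e.\ that every Ehrenfeucht--Mostowski model $M(T,\eta)$ is well-founded. Here I would use the standard reduction: any infinite descending $\in$-chain in $M(T,\eta)$ mentions only countably many indiscernibles and therefore already lies in the submodel generated by a countable suborder, so $T$ is well-founded if and only if the single model $M(T,\omega_{1})$ is well-founded. Isolating the one order type $\omega_{1}$ is exactly what will make both ``well-founded'' and ``ill-founded'' existential.

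Next I would render these two assertions as $\sqw$-formulas over $L'_{\alpha}$, using many-sorted projection just as in the proof of Theorem~\ref{thm:DeltaQ1}. To assert ``$T$ is well-founded'' I posit, as new sorts and relations ranging over arbitrary $V$-sets, a linear order $I$, a structure $\mathfrak A$, a Tarskian satisfaction relation for $\mathfrak A$, a surjection from Skolem-terms-over-$I$ onto $\mathfrak A$, and a rank function of $\mathfrak A$ into a well-order on a further sort. The quantifier $\W$ certifies that $I$ and the target are well-orders, while $Q_{1}$ together with $\W$ pins the order type of $I$ to $\omega_{1}$ (uncountable, with every proper initial segment countable); the first-order clauses then force $\mathfrak A\cong M(T,I)$, and the rank function witnesses well-foundedness. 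Dually, ``$T$ is ill-founded'' is the same existential statement but with a posited descending $\omega$-chain in $\mathfrak A$ in place of the rank function. Both are $\sqw$, and because $M(T,\eta)$ depends only on $T$ and on the order type $\omega_{1}$, exactly one of them holds; hence ``$T$ is well-founded'' is $\dqw$.

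Finally I would assemble $0^{\sharp}$ and invoke uniqueness. Let $P(T)$ say that $T$ is a remarkable well-founded EM set; by the above $P$ is $\dqw$, in particular $\sqw$, and by the theory of $0^{\sharp}$ there is exactly one such $T$ (which exists by hypothesis). Then $n\in 0^{\sharp}\iff\exists T\,(P(T)\wedge n\in T)$ and $n\notin 0^{\sharp}\iff\exists T\,(P(T)\wedge n\notin T)$ are complementary $\sqw$-formulas over $L'_{\alpha}$, so $0^{\sharp}$ is $\dqw$-definable there and enters the next level of $\cqw$. I expect the main obstacle to be the well-foundedness clause: arranging it so that it \emph{and} its negation are simultaneously $\sqw$ — this is precisely where $Q_{1}$ (to capture the order type $\omega_{1}$) and $\W$ (to certify well-foundedness) must be used together — and verifying that positing the auxiliary $V$-structures by projection is legitimate, which it is for the same reason the branch and specialization predicates were admissible in Theorem~\ref{thm:DeltaQ1}.
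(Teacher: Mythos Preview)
Your argument is correct, and it is a genuinely different route from the paper's. You work from the abstract EM--blueprint characterization of $0^{\sharp}$: the arithmetic clauses (complete, consistent, $V{=}L$, indiscernibility, remarkability) are first-order in a predicate $T\subseteq\omega$, while the single non-arithmetic clause ``well-founded'' is rendered $\dqw$ by positing the term model $M(T,\omega_1)$ as a new sort (with $\W$ and $Q_1$ pinning the indiscernible order to $\omega_1$) and then splitting on ``has a rank function into a well-order'' versus ``has a descending $\omega$-chain''. Uniqueness of $0^{\sharp}$ then turns $n\in 0^{\sharp}$ and $n\notin 0^{\sharp}$ into complementary $\sqw$-statements. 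The paper instead takes the concrete set $L_{\omega_1^V}$ as a parameter already present in some $L'_\alpha$ and uses Fact~\ref{fact:0sharp}: the $\Sigma/\Pi$ split is simply ``\emph{there exists} an uncountable set of indiscernibles $X\subseteq L_{\omega_1}$ with $\varphi$ holding at a tuple from $X$'' versus ``\emph{for every} such $X$ and every tuple, $\varphi$ holds''. The paper's approach is shorter because it never needs to build the EM model or axiomatize a satisfaction relation for a posited sort---$L_{\omega_1}$ is already there, and $\Sat^*$ is evaluated over the ambient $L'_\alpha$. Your approach, by contrast, needs only $\omega$ as a parameter (so it kicks in at a lower level of the hierarchy) and transparently mirrors the classical $\Delta^1_2$ definition of $0^{\sharp}$; it also makes more visible why both $Q_1$ and $\W$ are needed, since each is used in an essential and separate way (fixing $\otp(I)=\omega_1$ and certifying the rank codomain, respectively).
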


$0^{\sharp}$ can be defined in many equivalent ways,
and we present the definition most suitable for our purposes.
First recall the notion of indiscernibles.

\begin{definition}\label{def:indiscernibles}
	Let $\mathcal{M}=(M,E)$ be a model. A set $X \subseteq M$ is a \emph{set of indiscernibles for the model $(M,E)$} if for every $n \in \omega$, and every formula $\varphi\left(v_1, \ldots, v_n\right)$,
	$\mathcal{M} \vDash \varphi\left(\alpha_1, \ldots, \alpha_n\right) $ if and only if $\mathcal{M} \vDash \varphi\left(\beta_1, \ldots, \beta_n\right)$
	whenever $\alpha_1 E\alpha_{2}E \ldots E \alpha_n$ and $\beta_1 E\beta_{2} E \ldots E \beta_n$ are two $E$ increasing sequences of elements of $X$.
	We say that $X$ is a set of  \emph{ordinal indiscernibles for $(M,E)$} if, in addition,
	$(M,E) \vDash \text{``}x \text{ is an ordinal''}$ for every $x\in X$.
\end{definition}

\noindent Now, $0^{\sharp}$ is defined by the following fact.

\begin{fact}[Silver \cite{Silver-Thesis1971}]\label{fact:0sharp}
	If for \textbf{some} uncountable cardinal $\kappa$ there is a set of size $\kappa$ of ordinal indiscernibles for  $(L_{\kappa}, \in)$, then for \textbf{every} uncountable cardinal $\lambda$ there is a set of size $\lambda$ of ordinal indiscernibles for $(L_{\lambda},\in)$.

	Furthermore, in such a case, there is a set of first-order formulas, denoted $0^{\sharp}$, such that if $X$ is a set of ordinal indiscernibles for  $(L_{\kappa},\in)$ of size $\kappa$, and $\{\alpha_{k} \mid k<\omega\} \subseteq X$ is increasing, then
	\[ 0^{\sharp} = \{\varphi \mid (L_{\kappa},\in,\alpha_k)_{k<\omega} \models\varphi\}.\]
\end{fact}
The crux of the second part of this fact is that if the assumption ``for some uncountable cardinal $\kappa$ there is a set of size $\kappa$ of ordinal indiscernibles for  $(L_{\kappa}, \in)$'' holds, then the set $0^{\sharp}$ does not depend on $\kappa$ or the specific set of indiscernibles $X$.
Thus the assumption is commonly abbreviated as ``\emph{$0^{\sharp}$ exists}''.
In fact, $(L_{\kappa},\in)$ can be replaced with any well-founded model $(M,E)$ satisfying a large enough fragment of $\zfc+V=L$.
Let $<_{L}$ denote the uniformly definable well-order of $L$.
If $\mathcal{M}$ is a model of a large enough fragment of $\zfc$, satisfying the sentence $V=L$, then $<_{L}^{\mathcal{M}}$ is the interpretation of the definition of $<_{L}$ in $\mathcal{M}$.
Taking $\kappa=\omega_{1}$, note that $L_{\omega_{1}}$ is the unique (up to isomorphism) well-founded model of $V=L$ where  $<_{L}$ has order type $\omega_{1}$.
So, if $0^{\sharp}$ exists then for every well-founded model $\mathcal{M}=(M,E)$ of $V=L$
such that $<_{L}^{\mathcal{M}}$ has order type $\omega_{1}$,
and first order formula $\varphi(x_0,\ldots,x_{n-1})$ of set theory,
the following are equivalent:
\begin{enumerate}[label={(\arabic*)}]
	\item \label{enu:0sharp}  $ \varphi \in 0^{\sharp}$;
	\item \label{enu:sigma}  There is an uncountable $X \subseteq M$ such that
	      $X$ is a set of indiscernibles for  $\mathcal{M}$,
	      and there are $a_{0},\dots,a_{n-1} \in X$
	      such that $\mathcal{M} \models \varphi(a_{0},\dots,a_{n-1})$ ;
	\item \label{enu:pi}  For every uncountable $X \subseteq M$ such that
	      $X$ is a set of indiscernibles for  $\mathcal{M}$ and
	      for every $a_{0},\dots,a_{n-1} \in X$,
	      $\mathcal{M} \models \varphi(a_{0},\dots,a_{n-1})$.
\end{enumerate}

We will show that \ref{enu:sigma} and \ref{enu:pi} can be expressed as $\sqw$ and  $\pqw$, respectively, so that \ref{enu:0sharp} is in fact $\dqw$.

\begin{lemma}\label{lem:0sh-delta}
	If $0^{\sharp}$ exists then there is a formula $\Xi(u,v) \in \dqw$ such that for any admissible set $N\supseteq \omega_1^V$ and any $\mathcal{M}\in N$,
	\[
		\varphi \in 0^{\sharp} \iff (N,\in)\models_{\dqw} \Xi(\mathcal{M},\varphi). \qedhere
	\]
\end{lemma}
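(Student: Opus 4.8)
The plan is to convert the two equivalences \ref{enu:sigma} and \ref{enu:pi} recorded just before the lemma into a $\Sigma$- and a $\Pi$-definition over $(N,\in)$, and then read off $\Xi$ as the resulting $\dqw$-formula. The structural feature I exploit throughout is that a $\sqw$-definition may expand the model $(N,\in,\mathcal{M},\varphi)$ by a fresh (many-sorted) predicate $X$ whose interpretation is an \emph{arbitrary} subset of the domain --- in particular an uncountable set of indiscernibles that need not be an element of $N$ --- while the quantifier $Q_1$ still reads off its \emph{true} uncountability.

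First I would record what is available first-order over $(N,\in)$. Since $N$ is admissible and $\mathcal{M}\in N$ (so $M\con N$ by transitivity), the satisfaction relation $\Sat(\mathcal{M},\psi,s)$ of the set model $\mathcal{M}$ is $\Delta_1$ and absolute, hence first-order definable over $(N,\in)$ with parameter $\mathcal{M}$; likewise the canonical order $<_L^{\mathcal{M}}$ is first-order definable from $\mathcal{M}$, and quantification over codes of first-order formulas is quantification over $\omega\con N$. Consequently, for a fresh unary predicate $X$ the statement $\Ind(X,\mathcal{M})$ (``$X\con M$, and for every formula $\psi$ and all $<_L^{\mathcal{M}}$-increasing tuples $\vec a,\vec b$ from $X$ one has $\mathcal{M}\models\psi[\vec a]\leftrightarrow\mathcal{M}\models\psi[\vec b]$'') is first-order over $(N,\in,X)$, and so is ``there is a $<_L^{\mathcal{M}}$-increasing assignment of the free variables of $\varphi$ into $X$ satisfying $\varphi$ in $\mathcal{M}$''. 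I also need a check $\mathrm{Good}(\mathcal{M})$ that $\mathcal{M}$ is a well-founded model of enough of $V=L$ with $<_L^{\mathcal{M}}$ of order type $\omega_1$: here $\W$ verifies that $<_L^{\mathcal{M}}$ is a genuine well-order, while $Q_1$ and $\neg Q_1$ express ``uncountable but with all proper initial segments countable'', i.e. order type exactly $\omega_1$. Well-foundedness of $\mathcal{M}$ is then automatic, since in models of $V=L$ membership is contained in $<_L$, so a well-ordered $<_L^{\mathcal{M}}$ forces $\in^{\mathcal{M}}$ to be well-founded. Note $\mathrm{Good}(\mathcal{M})$ is an $\LL(Q_1,\W)$-formula, hence lies in $\sqw\cap\pqw$.

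Next I assemble the definitions. The $\sqw$-formula $\sigma(\mathcal{M},\varphi)$ asserts
\[
\mathrm{Good}(\mathcal{M})\ \land\ \exists X\big(Q_1z\,X(z)\ \land\ \Ind(X,\mathcal{M})\ \land\ (\text{$\varphi$ has a $<_L^{\mathcal{M}}$-increasing witnessing tuple in }X)\big),
\]
where $\exists X$ is realized as the reduct that forgets the fresh predicate $X$; this is exactly \ref{enu:sigma}. Dually, the negation of \ref{enu:pi} has the same shape with $\varphi$ replaced by $\neg\varphi$, so \ref{enu:pi} is $\pqw$; call that $\Pi$-formula $\pi(\mathcal{M},\varphi)$, conjoining $\mathrm{Good}(\mathcal{M})$ into it as well. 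By the equivalence of \ref{enu:0sharp}, \ref{enu:sigma} and \ref{enu:pi} established before the lemma, whenever $\mathrm{Good}(\mathcal{M})$ holds both $\sigma$ and $\pi$ are equivalent to $\varphi\in 0^{\sharp}$, and when $\mathrm{Good}(\mathcal{M})$ fails both are false; thus $\sigma\leftrightarrow\pi$ on \emph{all} inputs, so the pair $(\sigma,\pi)$ is a legitimate $\dqw$-formula $\Xi(u,v)$ satisfying \eqref{delta}. For $\mathcal{M}$ of the required form we then get $(N,\in)\models_{\dqw}\Xi(\mathcal{M},\varphi)$ iff $\varphi\in 0^{\sharp}$. (For the $\Sigma$-side to be witnessed when $\varphi\in 0^{\sharp}$ one uses that, by the existence of $0^{\sharp}$ and Fact \ref{fact:0sharp}, an uncountable set of indiscernibles $X\con M$ genuinely exists and serves as the interpretation of the fresh predicate even though $X\notin N$.)

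The main obstacle is the interplay between the two senses of ``uncountable'': the $Q_1$ inside $\sigma$ must measure the real cardinality of the interpretation of $X$, and that interpretation is deliberately allowed to escape $N$. Ensuring simultaneously that (i) the witnessing set of indiscernibles is available as a predicate interpretation over $(N,\in)$, (ii) $Q_1$ reports its genuine uncountability, and (iii) $\Ind$ together with the witnessing clause stay first-order over $(N,\in)$ --- which is precisely where admissibility of $N$ and the $\Delta_1$-absoluteness of satisfaction are indispensable --- is the delicate part. The secondary point requiring care is verifying that $\mathrm{Good}(\mathcal{M})$ is faithfully captured by $\W$ and $Q_1$, in particular that external well-foundedness of $\mathcal{M}$ really does follow from $<_L^{\mathcal{M}}$ being a well-order.
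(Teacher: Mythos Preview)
Your proof is correct and follows essentially the same route as the paper: formalize ``$X$ is an uncountable set of indiscernibles for $\mathcal{M}$'' using $Q_1$ together with a first-order $\Ind$ predicate, package the hypotheses on $\mathcal{M}$ into a $\mathrm{Good}$ clause expressible with $\W$ and $Q_1$, and read off the $\Sigma$/$\Pi$ pair from conditions \ref{enu:sigma} and \ref{enu:pi}. The only cosmetic differences are that the paper uses implication ($\mathrm{Good}\to\ldots$) where you use conjunction ($\mathrm{Good}\land\ldots$), and the paper applies $\W$ directly to the membership relation of $\mathcal{M}$ whereas you apply it to $<_L^{\mathcal{M}}$ and derive well-foundedness of $\in^{\mathcal{M}}$; both variants yield a valid $\dqw$ pair.
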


\begin{proof}
	We fix some notation, following Devlin's \cite[section A.I.9]{devlinConstructibility2017},
	where a distinction is made between formulas of the ``actual'' language of set theory with signature $\{\in\}$, denoted by \LST, and constructs which mimic these formulas \emph{inside} set theory, i.e. as specific sets.
	The result is a set denoted $\lst$, which we also treat as formulas, in a distinct signature $\{\epsilon\}$.
	Given a  set  $u$,  $\lst_{u}$ denotes formulas which include constant symbols for elements of $u$, such that the constant corresponding to $x\in u$ is denoted  $\mathring{x}$.
	We follow Devlin's convention where formulas of \LST\ are denoted by capital Greek letters, and their counterparts in $\lst$ by the corresponding lowercase letters.
	So for example if $\Phi(v)$ is an \LST\ formula with free variable $v$, and  $x\in u$, then there is a set  $\varphi(\mathring{x}) \in \lst_{u}$ corresponding to $\Phi(x)$.

	The results important to us are as follows:
	\begin{fact}\label{fact:lst}
		There are formulas of \LST\ as follows:
		\begin{enumerate}
			\item $\Fml(w,u)$ : $w$ is a formula of $\lst_{u}$;
			\item $\Fr(z,w)$ : $z$ is the set of free variables in the formula $w$;
			\item $\Sat(u, w)$ : $w$ is a sentence of $\lst_u$ which is true in the structure $( u, \in )$ under the canonical interpretation (i.e. with $\in$ interpreting $\epsilon$ and $x$ interpreting $\mathring{x}$ for each $x\in u$ ).
		\end{enumerate}
	\end{fact}

	These formulas can be modified to apply to general extensional structures $(u,e)$
	(where $e \subseteq u \times  u$)
	by taking care to replace every mention of $x \in y$ where  $x,y\in u$ by  $(x,y) \in e$.
	For example, a clause containing $(\exists x, y \in u)[(x \in y)...]$
	will be replaced by $(\exists x, y \in u)[(x,y) \in e ...]$.
	So by following the same constructions, we obtain formulas $\Fml^*(u,e,w)$, $\Fr^*(u,e,z,w)$, $\Sat^*(u,e,w)$ such that:

	\begin{fact}\label{fact:sat*}
		Let $\Phi\left(v_0, \ldots, v_n\right)$ be a formula of \LST, and $\varphi\left(v_0, \ldots, v_n\right)$ its counterpart in $\mathscr{L}$. Then for every well-founded extensional model $\mathcal{M}=(M,E)$ and $x_{0} ,\dots, x_{n} \in M$,
		\[
			\mathcal{M} \models
			\Phi(x_{0},\dots,x_{n}) \iff 	\Sat^*(M,E,\varphi(\mathring{x}_{0},\dots,\mathring{x}_{n}))
		\]
	\end{fact}

	Using this machinery, we will define a formula of \LST\ capturing the notion ``$X$ is a set of indiscernibles for $\mathcal{M}=(M,E)$'' (assuming $\mathcal{M}$ is a well-founded extensional model).
	We first describe some auxiliary notation.
	These notions should formally refer to a pair $(M,E)$ via variables $u,e$ as before, but for the sake of readability we will not always be explicit about it.
	So for example, when we talk about ordinals, we mean ``ordinals in the sense of $\mathcal{M}=(M,E)$''.
	We will also write e.g. $\psi \in \Fml^*(M,E)$ instead of $\Fml^*(M,E,\psi)$.

	\begin{itemize}
		\item An assignment (in $\mathcal{M}$) is a function whose  domain is a finite subset of variables (in the sense of $\mathcal{M}$).
		      We use $\sigma \in \Asn^*(X,Y)$ to denote that $\sigma$ is assignments with domain \emph{exactly} $X$ and range \emph{contained} in $Y$.
		      \footnote{This is in fact the same as $\sigma:X \to Y$, which we avoid to prevent confusion with the logical implication symbol $\to$ in the formulas presented below.}
		\item Given $\psi\in\Fml^*(M,E)$ and an assignment $\sigma$ such that the domain $\dom(\sigma)$ of $\sigma$ is $\Fr^*(M,E,\psi)$, we denote by $\psi[\sigma]$ the formula resulting from $\psi$ by substituting the free variables by the constants corresponding to the values of $\sigma$.
		      Note that this is an effective transformation on formulas in $\lst_{u}$.
		\item $v\in \ord^*(M,E)$ is the formula expressing that $v$ is an ordinal in the sense of  $(M,E)$.
		\item If $\sigma_{1},\sigma_{2}$ are assignments with ordinal values (i.e. functions whose domain is a finite subset of variables to the ordinals of $\mathcal{M}$), we denote by $\sigma_{1} \sim \sigma_{2}$ the statement:
		      \[
			      \dom(\sigma_{1})=\dom(\sigma_{2}) \land \forall v,u \in \dom(\sigma_{1})  [\sigma_{1}(v)<\sigma_{1}(u) \leftrightarrow \sigma_{2}(v) < \sigma_{2}(u)].
		      \]
	\end{itemize}

	\noindent	Now our desired formula, with predicate $X$ and parameter $\mathcal{M}$, denoted   $\Ind(X,\mathcal{M})$, is:
	\begin{gather*}
		\forall v[X(v)\to v\in \ord^*(M,E)] \land \forall \psi\in \Fml^*(M,E) \forall \sigma_{1},\sigma_{2}\in \Asn(\Fr^*(M,E,\psi),X)%
		\\
		[\sigma_{1} \sim \sigma_{2} \to (\Sat^*(M,E, \psi[\sigma_{1}]) \leftrightarrow \Sat^*(M,E, \psi [\sigma_{2}]))].
	\end{gather*}

	Let $\Gamma$ be the \LST\ sentence ``$V=L+\zfc^{-}_{n}$'' (for a large enough $n$),
	with $\gamma$ being the corresponding $\lst$-sentence.
	Let $\Upsilon(u,e,x,y)$ be the \LST\ formula such that whenever $(M,E)$ is a model of ``$V=L$'', $\Upsilon(M,E,x,y)$ defines its canonical well-order.
	Given a formula $\Psi=\Psi(x,y)$ which defines a linear order, let
	\[
		\Omega xy \Psi(x,y) = Q_{1}y \exists x \Psi(x,y) \land \forall y \neg Q_{1}x \Psi(x,y)
	\]
	be the $\LL(Q_{1})$ formula stating that the order defined by  $\Psi$ is $\omega_1$-like.

	Using this we can translate  (2) and (3) above (together with the needed assumptions on $\mathcal{M}$) to the following formulas, using $ \mathcal{M},\varphi$ as  first-order parameters:
	\begin{align*}
		2(\mathcal{M},\varphi) \quad
		 & \mathcal{M}=(M,E) \land  \W uv E(u,v) \land \Sat^*(M,E, \gamma)     \land \Omega xy \Upsilon(M,E,x,y) \\
		 & \to\ \exists X\subseteq M \big\{ Q_{1}v X(v) \land \Ind(X,\mathcal{M})                                \\
		 & \quad\land \exists \sigma\in \Asn(\Fr^*(M,E,\varphi), X)  [ \Sat^*(M,E, \varphi  [\sigma]) ]  \big\}
	\end{align*}
	\begin{align*}
		3(\mathcal{M},\varphi) \quad
		 & \mathcal{M}=(M,E) \land  \W uv E(u,v) \land \Sat^*(M,E, \gamma)     \land \Omega xy \Upsilon(M,E,x,y) \\
		 & \to \forall X\subseteq M \big\{Q_{1}v X(v) \land \Ind(X,\mathcal{M})                                  \\
		 & \quad\to \forall \sigma\in \Asn(\Fr^*(M,E,\varphi), X)  [ \Sat^*(M,E, \varphi[\sigma]) ] \big\}
	\end{align*}
	So, replacing $\mathcal{M},\varphi$ with variables $u,v$ (and making all the implicit definitions explicit), the above formulas are equivalent for every  $u,v$. The first formula is $\Sigma(Q_{1},\W)$ and the second $\Pi(Q_{1},\W)$, hence they are both $\Delta(Q_{1},\W )$.
	Hence, if we denote by $\Xi(u,v)$ the $\dqw$ formula represented by this pair, we get that indeed for any admissible $N\supseteq \omega_1^V$ and any $\mathcal{M}\in N$,
	\[
		\varphi \in 0^{\sharp} \iff (N,\in)\models_{\dqw} \Xi(\mathcal{M},\varphi). \qedhere
	\]
\end{proof}

\begin{proof}[Proof of theorem \ref{thm:0sharpQ1W}]
	Assume $0^{\sharp}$ exists and let $\cqw = \bigcup L'_{\alpha}$ be the levels of the construction, as in definition \ref{def:C(L)}.
	Let $\alpha$ be such that $L_{\omega_{1}} \in L'_{\alpha}$ (here $\omega_{1}=\omega_{1}^{V}$).
	For every first order formula $\varphi(v_{0},\dots,v_{n})$ in the signature $\{\epsilon\}$, and $x_{0},\dots,x_{n} \in L_{\omega_{1}}$ we have
	\begin{gather*}
		\Sat^*(L_{\omega_{1}}, \in \restriction_{L_{\omega_1}},\varphi(\mathring{x}_{0},\dots,\mathring{x}_{n})) \iff \\
		(L_{\omega_{1}}, \in \restriction_{L_{\omega_1}}) \models \Phi(x_{0},\dots,x_{n}) \iff \\
		(L'_{\alpha},\in) \models \Sat^*(L_{\omega_{1}}, \in \restriction_{L_{\omega_1}},\varphi(\mathring{x}_{0},\dots,\mathring{x}_{n})).
	\end{gather*}

	\noindent For short, we denote the model $(L_{\omega_{1}}, \in \restriction_{L_{\omega_1}})$ simply by $L_{\omega_{1}}$.
	So for $X \subseteq L'_{\alpha}$, we will get that
	\[
		\Ind(L_{\omega_{1}},X) \iff (L'_{\alpha},\in,X) \models \Ind(L_{\omega_{1}},X).
	\]
	Note that here we are considering first-order satisfaction.
	So now, if we consider satisfaction in $\lqw$ and  $\dqw$, we get that for every $\varphi$
	\begin{align*}
		2(L_{\omega_{1}},\varphi) & \iff (L'_{\alpha},\in) \models_{\lqw} 2(L_{\omega_{1}},\varphi)  \\
		3(L_{\omega_{1}},\varphi) & \iff (L'_{\alpha},\in) \models_{\lqw} 3(L_{\omega_{1}},\varphi),
	\end{align*}
	and by the result of the previous lemma, we get
	\[
		(L'_{\alpha},\in) \models_{\dqw} \Xi(L_{\omega_{1}},\varphi)	\iff \varphi \in 0^{\sharp}.
	\]
	Hence
	\[
		0^{\sharp}=	\{  \varphi    \mid (L'_{\alpha},\in) \models_{\dqw} \Xi(L_{\omega_1},\varphi)\} \in \cqw. \qedhere
	\]
\end{proof}

\section{Conclusion and open questions}

Let us summarize what has been obtained:
\begin{theorem*}\label{thm:summary}\phantom{}

	\begin{enumerate}
		\item $\zf \vdash C(\LL(Q_{0})) = C(\Delta(\LL(Q_{0}))) = L$ (while $\LL(Q_{0}) \ne \Delta(\LL(Q_{0}))$).
		\item $\zfc \vdash C(\LL^{2}) = C(\Delta(\LL^{2})) = \hod$ (while $\LL^{2} \ne \Delta(\LL^{2})$).
		\item It is consistent with $\zfc$ that $C(\LL(H))=\hod_{1} \ne \hod = C(\Delta(\LL(H)))$.
		\item It is consistent with $\zfc$ that $C(\LL(Q_{1}))=L \ne C(\Delta(\LL(Q_{1})))$ .
		\item It is consistent with $\zfc$ that $C(\LL(Q_{1},\W ))=L \ne C(\Delta(\LL(Q_{1},\W )))$, where $\LL(Q_{1},\W )$ is symbiotic with the predicate $R_{\aleph_{1}}$.
		\item If  $0^{\sharp}$ exists, then  $C(\LL(Q_{1},\W ))=L \ne C(\Delta(\LL(Q_{1},\W ))) \ni 0^{\sharp}$.
	\end{enumerate}

\end{theorem*}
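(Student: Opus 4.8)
The plan is to obtain the six assertions by assembling results already established in the preceding sections; the only item carrying genuine mathematical content is (6), which rests on Theorem~\ref{thm:0sharpQ1W} together with its supporting Lemma~\ref{lem:0sh-delta}. I would verify the items in turn, keeping in mind that (1)--(2) are outright $\zf$/$\zfc$ theorems while (3)--(6) are consistency or large-cardinal statements.

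For (1) I would recall the Barwise identity $\Delta(\LL(Q_0)) = \LL_{\mathrm{HYP}}$ from \cite{Barwise-Axioms} together with the fact that both $\LL(Q_0)$ and $\LL_{\mathrm{HYP}}$ are absolute logics in the sense of \cite{Barwise-Absolute}; the results of \cite{KMV1} then give $C(\LL(Q_0)) = C(\LL_{\mathrm{HYP}}) = L$ in $\zf$, and the separation $\LL(Q_0) \ne \Delta(\LL(Q_0))$ comes from the failure of Craig interpolation for $\LL(Q_0)$ \cite{Mostowski-Craig}. For (2), under AC the Myhill--Scott theorem \cite{MyhillScott} gives $C(\LL^2) = \hod$, and \cite[Proposition~2.9]{KMV1} transfers this to the $\Delta$-extension, so $C(\Delta(\LL^2)) = \hod$ as well; the inequality $\LL^2 \ne \Delta(\LL^2)$ is again the interpolation failure \cite{Mostowski-Craig}. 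These are the two \emph{degenerate} cases, where $\Delta$ changes the logic but not its inner model.

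For the consistency statements I would simply cite the relevant theorems. Item (4) is exactly Theorem~\ref{thm:DeltaQ1}, and item (5) is exactly Theorem~\ref{10}, the symbiosis clause being supplied by the proposition establishing that $\LL(Q_1,\W)$ is symbiotic with $R_{\aleph_1}$. Item (3) I would assemble from the computation $C(\LL(H)) = \hod_1$ together with the fact, via \cite{AKL-Henkin}, that the many-sorted $\Delta(\LL(H))$ already contains full second-order logic, whence $C(\Delta(\LL(H))) = \hod$; the consistency of $\hod_1 \ne \hod$ then comes from \cite[Theorem~7.9]{KMV1}.

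The substantive item is (6). Here I would combine two inputs: first, $\zf \vdash C(\LL(Q_1,\W)) = L$ from \cite[section~3]{KMV1}; second, Theorem~\ref{thm:0sharpQ1W}, which places $0^\sharp$ inside $\cqw$. Since $0^\sharp \notin L$, these two inner models are distinct, yielding $C(\LL(Q_1,\W)) = L \ne C(\Delta(\LL(Q_1,\W))) \ni 0^\sharp$. I expect the entire difficulty to sit in Theorem~\ref{thm:0sharpQ1W}: one must render \textquotedblleft$\varphi \in 0^\sharp$\textquotedblright\ as a $\dqw$ property of a model coding $L_{\omega_1}$, using $\W$ to pin down well-foundedness, $Q_1$ to force the indiscernible set to be uncountable, and the Devlin-style $\Sat^*$ and $\Ind$ machinery of Lemma~\ref{lem:0sh-delta} to make indiscernibility definable; the closing task is then only to check that these computations are absolute between $V$ and a level $L'_\alpha$ with $L_{\omega_1} \in L'_\alpha$, which uses the correct computation of $Q_1$ (that is, of $\omega_1^V$) and of well-foundedness at that level. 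Everything outside item (6) is bookkeeping over earlier results.
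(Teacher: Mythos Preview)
Your proposal is correct and matches the paper's approach: the summary theorem is presented in the paper without proof, as a recapitulation of results already established in Sections~2--5, and your item-by-item assembly cites exactly the same ingredients the paper relies on. Your extended remarks on the internals of Theorem~\ref{thm:0sharpQ1W} go beyond what is needed here (that theorem is already proved earlier and need only be cited), but they accurately reflect the paper's argument.
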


This leaves the following question open:

\begin{question}\label{que:pres-cond} 
	Are there necessary and/or sufficient conditions on the logic $\LL$ (besides the trivial $\LL=\Delta(\LL)$) which imply  $\zf \vdash C(\LL)=C(\Delta(\LL))$?
\end{question}

Dual to this would be trying to identify properties of a logic that would \emph{allow} $C(\LL) \ne C(\Delta(\LL))$:
\begin{question}\label{que:diff-cond} 
	Are there necessary and/or sufficient conditions on the logic $\LL$ which imply the consistency of $C(\LL) \ne C(\Delta(\LL))$?
\end{question}

The path to answering both these questions would benefit from analysing additional examples of logics and their $\Delta$-extensions.
Such examples may include the logics obtained from Magidor-Malitz or cofinality quantifiers ($\LL(Q^{\mathrm{MM}})$ and $\LL(Q^{\mathrm{cf}}_{\alpha})$), stationary logic $\LL(\mathtt{aa})$, and so on.

A further question regarding the theory of models constructed from logics obtained by the $\Delta$-operation is the status of AC in them.
As we mentioned in the beginning (page \pageref{AC}), the known methods of proving that AC holds in models of the form $C(\LL)$ requires  $\LL$ to have recursive syntax, which $\Delta(\LL)$ is not known to have in general.
In particular, we do not know whether the models $\cq$ and $\cqw$ provably satisfy AC, and even whether AC holds in the specific constructions we used here.
Answering these questions will require a deeper analysis of the ways forcing affects truth in the logics $\dq$ and $\dqw$, which is beyond the scope of this paper.
Hence the following is open:

\begin{question}\label{que:AC}
	What is the status of the Axiom of Choice in models of the form $C(\Delta(\LL))$?
\end{question}

Another direction for further research is the impact of large cardinals on $C(\Delta(\LL))$.
Theorem \ref{thm:0sharpQ1W} gives an initial result showing that large cardinals do affect the expressive power of logics obtained by the $\Delta$-operation.
Hence we may ask a somewhat broad question:

\begin{question}\label{que:largecards}
	What is the effect of large cardinals on the relationship between $C(\LL)$  and $C(\Delta(\LL))$?
\end{question}

On a more concrete level, the models $\cq$ and $\cqw$ require further investigation.
For example, a first step would be to answer the following:
\begin{question}\label{que:specific-models}
	\begin{enumerate} \thmenumhspace
		\item Is it consistent that $\cq \ne \cqw$?
		\item Do the models $\cq, \cqw$ satisfy $AC$?
		\item Do the models $\cq, \cqw$ provably contain all the reals?
	\end{enumerate}
\end{question}
\noindent
Another set of questions regards the question of idempotence: for a logic $\LL$, we say that  $C(\LL)$ is \emph{provably idempotent} if it is provable that $C(\LL)^{C(\LL)}=C(\LL)$;
and \emph{consistently idempotent (and $\ne L$)} if it is consistent that $C(\LL)^{C(\LL)}=C(\LL)\ne L$. We will usually omit the phrase in parentheses.
Note that the last statement is equivalent to ``it is consistent that $V=C(\LL)\ne L$ holds''.
Of course it is only interesting in the case that consistently, $C(\LL) \ne L$, as $L$ is provably idempotent. Other examples of provable idempotence are models constructed from infinitary logics $C(\LL_{\kappa,\kappa})$ \cite{changLkk}, but this is the exception rather than the rule, as  models such as $\hod, C^*, C(\aaa)$ are only consistently idempotent (see \cite{McAloon1,mcaloon2,KMV1,KMV2,Yaar-IteratingCofinalityoConstructible2023,Yaar-IteratedClubShooting2025}).

It is likely that the constructions we produced here for the logics $\cq$ and  $\cqw$ result in models where idempotence fails, as, intuitively, we only code the truth of some fact (for example which trees are not Aronszajn), but not the witnesses to this fact (the generic branches).
However proving that this is in fact the case requires further research, and we leave the following open:
\begin{question}\label{que:idempotence}
	Are the models $\cq$ and  $\cqw$ consistently or even provably idempotent?
\end{question}

\noindent
Many more questions may be asked, which we hope to address in further research.

\subsection{The $\Delta_{1}^{1}$ operation}\label{subsec:delta11}
Finally, we mention a notion closely related to the $\Delta$-operation -- the $\Delta_{1}^{1}$-operation -- defined in the same way, except that when expanding the vocabulary we only allow adding new predicates, not new sorts.
This results in a weaker closure operation on logics, as for example $\Delta_{1}^{1}(\LL^{2}) = \LL^{2} \ne \Delta(\LL^{2})$ (for more on this, see \cite{Vaananen-DextensionHanfnumbers1983}).
Note that in this case,
\[
	\hod = C(\LL^{2}) = C(\Delta_{1}^{1}(\LL^{2})) = C(\Delta(\LL^{2})).
\]
However the equality $C(\Delta_{1}^{1}(\LL))=C(\Delta(\LL))$ is not true in general, as can be seen in the following example.
Recall Theorem \ref{thm:Henkin-DHenkin} where we show that consistently $C(\LL(H)) \ne C(\LL(\Delta(H)))$.
\begin{proposition}\label{prop:delta11Henkin}
	$C(\Delta_{1}^{1}(\LL(H)))=C(\LL(H))$
\end{proposition}

\begin{proof}
	The direction $\supseteq$ is clear.
	Now note that a $\Sigma_{1}^{1}(L(H))$-formula can be expressed in the form $\exists R_{1}\dots \exists R_{n} \Phi$ with $\Phi\in L(H)$,
	and as we noted in the proof of Theorem \ref{thm:HenkinHOD1},
	in the presence of a pairing function we can express the existential quantifiers in the terms of the Henkin quantifier.
	So we can inductively prove the other direction.
\end{proof}

\begin{corollary}\label{cor:delta11Henkin}
	It is consistent with $\zfc$ that $C(\Delta_{1}^{1}(\LL(H))) \ne C(\Delta(\LL(H)))$.
\end{corollary}

Nevertheless, the results in sections \ref{sec:Q1}-\ref{sec:0sharp} apply to the $\Delta_{1}^{1}$-operation as well,
as the formulas used to separate the models only require adding predicates, not new sorts.
Thus the following holds:

\begin{theorem}\label{thm:Delta11}\phantom{}
	\begin{enumerate}
		\item It is consistent with $\zfc$ that $C(\LL(Q_{1}))=L \ne C(\Delta_{1}^{1}(\LL(Q_{1})))$.
		\item It is consistent with $\zfc$ that $C(\LL(Q_{1},\W ))=L \ne C(\Delta_{1}^{1}(\LL(Q_{1},\W )))$, where $\LL(Q_{1},\W )$ is symbiotic with the predicate $R_{\aleph_{1}}$.
		\item If  $0^{\sharp}$ exists, then  $C(\LL(Q_{1},\W ))=L \ne C(\Delta_{1}^{1}(\LL(Q_{1},\W ))) \ni 0^{\sharp}$.
	\end{enumerate}
\end{theorem}

\bibliographystyle{amsalpha}
\bibliography{Bibliography}

\end{document}